\documentclass{wsbart/wsbart}
\usepackage{lmodern}
\usepackage{amsmath,amssymb,amsthm,mathtools}
\usepackage{booktabs,microtype,dsfont}
\usepackage{xurl}

\usepackage[maxnames=99,style=trad-plain,backend=biber,isbn=false,
uniquename=init]
{biblatex}

\DeclareRedundantLanguages{English}{english}

\renewcommand{\mathbb}{\mathds}
\renewcommand{\leq}{\leqslant}
\renewcommand{\geq}{\geqslant}

\newcommand{\BV}{\mathrm{BV}}

\newtheorem{theorem}{Theorem}
\newtheorem{proposition}[theorem]{Proposition}
\newtheorem{lemma}[theorem]{Lemma}
\theoremstyle{remark}
\newtheorem{remark}[theorem]{Remark}

\newcommand{\R}{\mathbb R}
\DeclareMathOperator{\E}{\mathbb E}
\DeclareMathOperator{\PP}{\mathbb P}
\newcommand{\dd}{\,\mathrm d}
\newcommand{\Id}{\mathrm{Id}}
\newcommand{\Lip}{\operatorname{Lip}}

\newcommand{\supp}{\operatorname{supp}}

\newcommand{\one}{\mathbf 1}
\newcommand{\cP}{\mathcal P}
\newcommand{\cS}{\mathsf S}
\newcommand{\cG}{\mathcal G}
\newcommand{\ip}[2]{\langle #1,#2\rangle}
\newcommand{\norm}[1]{\lVert #1\rVert}

\title{Quantitative stability of optimal transport\\via regularization}
\author[1]{Songbo Wang}
\author[2]{Xiaozhen Wang}
\affil[1]{CEREMADE, Universit\'e Paris-Dauphine, Universit\'e PSL, Paris, France}
\affil[2]{CMAP, \'Ecole polytechnique, Institut polytechnique de Paris, Palaiseau, France}
\keywords{optimal transport, quantitative stability, regularization, convex duality, bounded variation}

\begin{document}
\maketitle
\begin{abstract}
We prove quantitative stability estimates for quadratic optimal transport
maps by regularizing a Brenier potential and estimating the error in the
duality gap. The source has an $L^p$ density, $p>1$, relative to a
probability measure with a BV density. When one target lies in a fixed
ball, we obtain the $W_2$ H\"older exponent $(p-1)/(4p-2)$. The constant
depends on the dimension through three explicit source quantities: the
variance, the total variation of the reference density and the $L^p$
norm of the weight. The proof uses a second-order bound on the averaged
regularized Fenchel gap. By restricting the slopes of the potential,
we also treat targets with a bounded moment of any order greater than
two. In both estimates, the condition is imposed on one target, while
the other can be any probability measure with finite second moment.
For Gaussian and isotropic log-concave sources, the constants grow
polynomially with the dimension. For $L^p$ changes of Gaussian measure,
the factor in the bounded-target estimate is $d^{(p-1)/(4p-2)}$.
For each $p$, a fixed source in two dimensions proves optimality of both
stability exponents in this source class. A Gaussian construction
provides polynomial lower bounds on the dimension dependence.
\end{abstract}

\section{Introduction and main results}\label{sec:intro}

Let $\rho\in\cP_2(\R^d)$ be absolutely continuous. For
$\mu\in\cP_2(\R^d)$, denote by $T_\mu=\nabla\phi_\mu$ the Brenier map from
$\rho$ to $\mu$, for the quadratic cost. Quantitative stability asks how
$\norm{T_\mu-T_\nu}_{L^2(\rho)}$ depends on a Wasserstein distance between
$\mu$ and $\nu$. We allow discrete targets and impose a support or
higher-moment bound on one target. All norms are Euclidean, and
$W_r$ denotes the usual $r$-Wasserstein distance.

Stability under regularity assumptions goes back to an argument of
Ambrosio, reported by
Gigli~\cite[Proposition~3.3 and Corollary~3.4]{GigliOnHoelder}.
If the optimal map is Lipschitz, the conjugate potential is strongly
convex, and the duality gap controls the squared distance
to the optimal graph. Li and Nochetto~\cite[Proposition~3.1]{LiNochettoQuantitative}
formulate this coercivity for transport plans and use it to prove
stability under perturbations of both marginals. Their
Theorem~3.5 treats arbitrary $\cP_2$ perturbations when the reference
optimal map is Lipschitz.

For compact targets, Berman~\cite{BermanConvergenceRates} proved stability
estimates with dimension-dependent exponents under upper and lower
bounds on a source density over a convex body. M\'erigot, Delalande and
Chazal~\cite[Theorem~3.1]{MDCQuantitative} proved the dimension-independent
exponent $2/15$ for uniform sources on convex bodies.
Delalande and M\'erigot~\cite[Theorem~4.2]{DelalandeMerigotQuantitative}
improved this to
$W_1^{1/6}$ for source densities bounded above and away from zero on a
convex body. They also treated unbounded targets under moment
assumptions~\cite[Corollary~4.4]{DelalandeMerigotQuantitative}.
Letrouit and M\'erigot~\cite{LetrouitMerigotGluing}
used gluing arguments to treat more general sources, including
log-concave densities with full support in $\R^d$.
M\'erigot~\cite{MerigotSharpStability} proved the sharp $W_1^{1/4}$
estimate for bounded, compactly supported source densities, using
quantitative regularity estimates for convex functions due to Carlier,
Delalande and M\'erigot~\cite{CDMQuantitative}.
Ford~\cite{FordQuantitative} extended this approach to upper Ahlfors
regular sources
and to stability of plans under changes in both marginals.

The OpenAI preprint~\cite[Theorem~1.1]{OpenAISharpOneThird}
proves the sharp $W_2^{1/3}$ estimate for uniform
sources on compact convex bodies in dimensions $d\geq2$, with both
targets in a fixed compact set.

In this paper, we consider sources given by $L^p$ changes of BV reference
densities, with possibly unbounded support. We prove optimal H\"older
exponents for $W_2$ stability under a support or higher-moment bound on
one target, with explicit constants. The examples proving optimality use a fixed source
with unbounded, disconnected support for each $p$.

We regularize the Brenier potential to obtain a Lipschitz gradient and
apply the coercivity argument above. It then remains to estimate the
regularization error. We use the Moreau--Yosida envelope
\cite{MoreauProximite,BauschkeCombettesConvex}, whose conjugacy identity
$\phi_t^*=\phi^*+t\lvert\cdot\rvert^2\!/2$ makes the first-order changes of
the two potentials cancel after integration against the optimal plan.
A BV integration-by-parts estimate bounds the error by $O(t^2)$.
Interpolation extends this estimate to $L^p$ changes of density.
Quantitative Fenchel--Young inequalities involving proximal maps have
also been used in optimal transport by Carlier~\cite{CarlierFenchelYoung}.

The constants depend on the dimension through the source parameter
$\cS_p$ in \eqref{eq:parameters}. For log-concave sources, this dependence
is polynomial. For an $L^p$ change of Gaussian measure with a fixed
bound on the weight, the factor in the bounded-target estimate is
$d^{\alpha_p}$.

In both estimates, the second target ranges over all of $\cP_2(\R^d)$.
For bounded, compactly supported source densities, a one-sided support
condition follows from
M\'erigot~\cite[Theorem~3.4]{MerigotSharpStability} and the coupling argument in
Lemma~\ref{lem:gap-distance}; see Remark~\ref{rem:one-sided}.
Our moment estimate assumes a bound of any order $q>2$ on one target.
The moment estimates of
Delalande--M\'erigot~\cite[Theorem~4.3 and
Corollary~4.4]{DelalandeMerigotQuantitative}
assume higher-moment bounds on both targets.

\paragraph{AI use}
The use of potential regularization to prove stability arose in a
dialogue between the authors and OpenAI Codex (GPT-6). The AI first
proved a $W_2^{1/4}$ stability estimate for compactly supported targets
using heat regularization. The authors then asked it to determine
whether this exponent was optimal, extend the result to $L^p$ source
densities and noncompact targets, and compare regularization schemes.
This comparison led to the Moreau--Yosida argument, which improves the
dimension dependence for $p<\infty$. The AI wrote the entire manuscript,
and the authors checked it.

\paragraph{Funding}
This work benefited from support from the French State,
managed by the French National Research Agency (ANR)
as part of the France 2030 programme, reference ANR-23-IACL-0008.

\subsection{Main results}

Fix $p\in(1,\infty]$. We assume that the source and reference measures
satisfy
\begin{equation}\label{eq:source}
\begin{alignedat}{2}
\rho_0&=f_0\dd x\in\cP_2(\R^d),&\qquad f_0&\in\BV(\R^d),\\
\rho&=h\rho_0\in\cP_2(\R^d),&\qquad h&\in L^p(\rho_0).
\end{alignedat}
\end{equation}
Here $h$ is nonnegative and normalized by $\int h\dd\rho_0=1$.

Write
\[
\Lambda=\norm{h}_{L^p(\rho_0)},\qquad B_0=|Df_0|(\R^d).
\]
The variation $B_0$ includes boundary contributions when $f_0$ jumps
across the boundary of its support. For example,
$f_0=|K|^{-1}\one_K$ gives $B_0=\operatorname{Per}(K)/|K|$.
Let $m_\rho$ be the source mean and $M_\rho$ the root-mean-square
distance from this mean:
\[
m_\rho=\int x\dd\rho(x),\qquad
M_\rho=\biggl(\int|x-m_\rho|^2\dd\rho(x)\biggr)^{\!1/2}.
\]
Let $p'=p/(p-1)$, with $p'=1$ when $p=\infty$.
Define
\begin{equation}\label{eq:parameters}
\cS_p=M_\rho B_0\Lambda^{p'},\qquad
\alpha_p=\frac1{2(p'+1)}.
\end{equation}
The quantity $\cS_p$ is invariant under a common translation or scalar
dilation of the source and reference measures. If $\rho$ itself has a
BV density, we may take $\rho_0=\rho$, $h=1$ and $p=\infty$;
then $\cS_\infty=M_\rho B_0$ and $\alpha_\infty=1/4$.

\begin{theorem}[One bounded target]\label{thm:bounded}
Assume \eqref{eq:source}. Let $\mu$ be supported in a ball of radius
$R>0$, and let $\nu\in\cP_2(\R^d)$ be arbitrary. Then, with
$\delta=W_2(\mu,\nu)$,
\begin{equation}\label{eq:bounded}
\norm{T_\mu-T_\nu}_{L^2(\rho)}
\leq \delta+2\sqrt2\,R^{1-\alpha_p}(\cS_p\delta)^{\alpha_p}.
\end{equation}
In particular, $\alpha_p=(p-1)/(4p-2)$ and $\alpha_\infty=1/4$.
\end{theorem}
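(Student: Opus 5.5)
The plan is to regularize the Brenier potential of the bounded target, $\phi=\phi_\mu$, compare both $T_\mu$ and $T_\nu$ with the gradient of the regularization, and then optimize in the regularization parameter. Translating everything, we may assume $\mu$ is supported in $\overline B_R(0)$. We choose $\phi$ as a Legendre transform over the ball, $\phi(x)=\sup_{|y|\leq R}(x\cdot y-\phi^*(y))$. Then $\phi$ is $R$-Lipschitz and $\phi^*=+\infty$ off $\overline B_R$.

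For $t>0$, let $\phi_t$ be the Moreau--Yosida envelope of $\phi$. It has the following properties:
\begin{itemize}
\item $\nabla\phi_t$ is $(1/t)$-Lipschitz;
\item $\phi_t$ is still $R$-Lipschitz, so $\nabla\phi_t$ takes values in $\overline B_R$;
\item $\phi_t^*=\phi^*+t|\cdot|^2/2$ is $t$-strongly convex.
\end{itemize}
The triangle inequality gives
\[
\norm{T_\mu-T_\nu}_{L^2(\rho)}\leq\norm{T_\mu-\nabla\phi_t}_{L^2(\rho)}+\norm{\nabla\phi_t-T_\nu}_{L^2(\rho)},
\]
and we treat the two terms separately.

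\textbf{Step 1 (bound for $\mu$).} Consider the averaged Fenchel gap
\[
G(t)=\int\bigl(\phi_t(x)+\phi_t^*(T_\mu x)-x\cdot T_\mu x\bigr)\dd\rho .
\]
Because $\phi_t^*$ is $t$-strongly convex and $\nabla\phi_t(x)$ is the maximizer in the conjugate formula, the integrand dominates $\tfrac t2|T_\mu x-\nabla\phi_t(x)|^2$. Hence $\norm{T_\mu-\nabla\phi_t}^2\leq 2G(t)/t$.

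The heart of the proof is a second-order bound on $G$. The first-order variations cancel: $\partial_t\phi_t=-\tfrac12|\nabla\phi_t|^2$ on one side and $+\tfrac12|y|^2$ on the conjugate side. So $G(t)$ is an integral over $s\in(0,t)$ of $\tfrac12\int(|T_\mu|^2-|\nabla\phi_s|^2)\dd\rho$. This difference is in turn controlled by $\int(\nabla\phi_s-\nabla\phi)\cdot(\text{something bounded by }R)\dd\rho$.

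For $\rho=\rho_0$ with BV density, write $\nabla\phi_s-\nabla\phi$ through the proximal map and integrate by parts against $f_0$. This produces a factor $s\,|Df_0|$ times a quantity of size $R\cdot M_\rho$ (the centering at $m_\rho$ removes the linear part). The expected result is $G(t)\lesssim t^2 R\,M_\rho B_0$.

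For $\rho=h\rho_0$, split $h$ at height $\lambda$. The bounded part uses the BV estimate. The tail $\{h>\lambda\}$ uses the crude bound $G$-integrand $\lesssim R^2$ pointwise together with Chebyshev: $\rho(h>\lambda)\leq\Lambda^p\lambda^{1-p}$. Optimizing in $\lambda$ should give
\[
G(t)\lesssim t\,R^{2-1/p'}(t\,\cS_p)^{1/p'},
\]
which interpolates between the $t^2$ and $t$ regimes.

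\textbf{Step 2 (bound for $\nu$).} Take an optimal coupling $\pi$ of $\mu$ and $\nu$. Glue it with $(\Id,T_\mu)_\#\rho$ to obtain a plan $\gamma$ from $\rho$ to $\nu$. Apply Lemma~\ref{lem:gap-distance}, the coupling argument mentioned in the introduction, in the form that uses only the smoothness of $\phi_t$ on the $x$ side. The descent inequality $\phi_t(x')\leq\phi_t(x)+\nabla\phi_t(x)\cdot(x'-x)+|x'-x|^2/(2t)$ lets us work without $\phi_t^*$ on $\operatorname{supp}\nu$, which may be unbounded. Together with $|\nabla\phi_t|\leq R$, this should yield
\[
\norm{\nabla\phi_t-T_\nu}^2\leq\bigl(\delta+\norm{\nabla\phi_t-T_\mu}\bigr)^2+\frac{CR\delta}{t}.
\]
This is where the additive $\delta$ in \eqref{eq:bounded} comes from.

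\textbf{Step 3 (optimization).} Combining the two steps, the error beyond $\delta$ is of order
\[
\Bigl(R^{2-1/p'}(t\,\cS_p)^{1/p'}+\frac{R\delta}{t}\Bigr)^{1/2}.
\]
Choosing $t^{(p'+1)/p'}\asymp R^{1/p'-1}\delta\,\cS_p^{-1/p'}$ balances the two terms. The square root then gives exactly $R^{1-\alpha_p}(\cS_p\delta)^{\alpha_p}$ with $\alpha_p=1/(2(p'+1))$, and tracking constants should produce $2\sqrt2$.

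The main obstacle is Step 1, specifically the BV integration by parts bounding the averaged gap by $O(t^2)$ with the precise factor $RM_\rho B_0$. This requires representing $\nabla\phi_s-\nabla\phi$ as a derivative in $x$ that can be moved onto $f_0$, using the monotonicity and contraction properties of the proximal map. I would first try the identity $\nabla\phi_s(x)=\nabla\phi(\mathrm{prox}_{s\phi}x)$ combined with a divergence form of $|x-\mathrm{prox}_{s\phi}x|^2$.
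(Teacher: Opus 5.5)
Your architecture is the paper's: Moreau--Yosida regularization of the bounded-target potential, coercivity from the $t$-strong convexity of $\phi_t^*$, a second-order bound on the averaged gap, $L^p$ interpolation, and balancing in $t$. Splitting $h$ at a level $\lambda$ is also a valid substitute for the paper's H\"older step.

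The genuine gap is Step~2, which is the only place where an arbitrary $\nu\in\cP_2(\R^d)$ and the factor $M_\rho$ enter. Gluing the target coupling with $(\Id,T_\mu)_\#\rho$ into a plan from $\rho$ to $\nu$ puts the unbounded measure in the $y$-slot, and by itself never involves $T_\nu$. You give no descent-inequality argument, and the inequality you aim for is false. Dilate the source by $\lambda$ with the targets fixed. This leaves $R$, $\delta$ and all $L^2(\rho)$ distances between maps unchanged, but replaces $\nabla\phi_t$ by $\nabla\phi_{t/\lambda}(\cdot/\lambda)$. So your bound would hold with $CR\delta/(\lambda s)$, where $s=t/\lambda$. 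Letting $\lambda\to\infty$ and then $s\to0$ would give $\norm{T_\mu-T_\nu}_{L^2(\rho)}\leq\delta$, contradicting Proposition~\ref{prop:sharpness} with exponent $a=1$.

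The missing idea (Lemma~\ref{lem:gap-distance} together with Lemma~\ref{lem:gap-stability}) is to glue the target coupling with \emph{both} optimal graphs. This gives $(X_\mu,X_\nu,Y_\mu,Y_\nu)$, and you then work with $\sigma=\operatorname{Law}(X_\nu,Y_\mu)\in\Pi(\rho,\mu)$.
\begin{itemize}
\item The second variable of $\sigma$ stays in $\overline B(0,R)$, where $\phi_t^*=\phi^*+\frac t2|\cdot|^2$ is finite, so plain coercivity applies there.
\item All plans in $\Pi(\rho,\mu)$ share their marginals, so $\cG_t(\sigma)=\cG(\sigma)+A_t$ with the same bias $A_t=\cG_t(\sigma_*)=G(t)$.
\item Optimality of $T_\nu$ gives $\cG(\sigma)=\E\ip{X_\mu-X_\nu}{Y_\mu}\leq\E\ip{X_\mu-X_\nu}{Y_\mu-Y_\nu}\leq2M_\rho\delta$.
\end{itemize}
Hence $\norm{\nabla\phi_t-T_\nu}_{L^2(\rho)}\leq\delta+\bigl(\tfrac2t(2M_\rho\delta+A_t)\bigr)^{1/2}$. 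This is where $M_\rho$ enters: through $M_\rho\delta/t$, not $R\delta/t$.

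Step~1 misplaces $M_\rho$ as well, because the bias does not involve it. For $\rho=\rho_0$ the paper proves $A_t\leq R^3B_0t^2/6$. The Hamilton--Jacobi equation gives $A_s''=\int\ip{\nabla\phi_s}{D^2\phi_s\nabla\phi_s}f_0\dd x$. The pointwise inequality $\operatorname{div}(|\nabla\phi_s|^2\nabla\phi_s)\geq3\ip{\nabla\phi_s}{D^2\phi_s\nabla\phi_s}$ is then integrated by parts against $Df_0$; no centering at $m_\rho$ occurs. Your bound $G(t)\lesssim t^2RM_\rho B_0$ cannot hold. Scaling the targets by $\kappa$ sends $G(t)$ to $\kappa G(\kappa t)$ and $R$ to $\kappa R$, so the bound would force $G\equiv0$. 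Fed the correct BV bound, your level-set splitting gives $A_t\lesssim\Lambda B_0^{1/p'}R^{2+1/p'}t^{1+1/p'}$.

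Finally, your Step~3 arithmetic does not give what you claim. Your two terms balance to $R^{1-2\alpha_p}(\cS_p\delta)^{\alpha_p}$, for example $R^{1/2}(\cS_\infty\delta)^{1/4}$ when $p=\infty$, which is not even invariant under scaling of the targets. With the correct terms $\Lambda B_0^{1/p'}R^{2+1/p'}t^{1/p'}$ and $M_\rho\delta/t$, balancing in $t$ does give $R^{1-\alpha_p}(\cS_p\delta)^{\alpha_p}$.
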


\begin{theorem}[One target with a bounded moment]\label{thm:moment}
Assume \eqref{eq:source}. Let $\nu\in\cP_2(\R^d)$ be arbitrary, and
suppose that, for some $q>2$ and $A>0$,
\[
\int |y|^q\dd\mu(y)\leq A^q.
\]
Set
\begin{equation}\label{eq:beta}
\beta_{p,q}=\frac{\alpha_p(q-2)}{q-2\alpha_p}
=\frac{q-2}{2q(p'+1)-2}.
\end{equation}
Then, with $\delta=W_2(\mu,\nu)$,
\begin{equation}\label{eq:moment}
\norm{T_\mu-T_\nu}_{L^2(\rho)}
\leq\delta+10A^{1-\beta_{p,q}}(\cS_p\delta)^{\beta_{p,q}}.
\end{equation}
For $p=\infty$, the exponent is $(q-2)/(4q-2)$.
\end{theorem}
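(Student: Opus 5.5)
We reduce Theorem~\ref{thm:moment} to the mechanism behind Theorem~\ref{thm:bounded}: regularize the potential, use coercivity, and control the regularization error in the duality gap. The new ingredient is to restrict the slopes of the potential to a ball of radius $L$, to be optimized at the end. Concretely, let $\phi=\phi_\mu$ and $\psi=\phi^*$. Replace $\psi$ by $\psi^L=\psi+\iota_{\bar B_L}$, which equals $\psi$ on $\bar B_L$ and $+\infty$ outside. Then $\phi^L=(\psi^L)^*$ is a convex potential with $\nabla\phi^L\in\bar B_L$. Apply the Moreau--Yosida envelope $\phi^L_t$, whose conjugate is $\psi^L+t\lvert\cdot\rvert^2/2$. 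This conjugate is $t$-strongly convex, so $\nabla\phi^L_t$ is $1/t$-Lipschitz and takes values in $\bar B_L$.

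Let $\gamma$ be any coupling of $\mu$ and $\nu$, and let $\pi_\nu=(\Id,T_\nu)_\#\rho$. The coercivity/coupling argument (Lemma~\ref{lem:gap-distance}, as used for Theorem~\ref{thm:bounded}) bounds $\tfrac t2\norm{\nabla\phi^L_t-T_\nu}^2_{L^2(\rho)}$ by the sum of three terms:
\begin{itemize}
\item the averaged regularized Fenchel gap $\int(\phi^L_t+(\phi^L_t)^*-\ip xy)\dd\pi_\mu$;
\item the cross term $\int (\phi^L_t)^*\dd(\nu-\mu)$, controlled by the coupling;
\item the error from $\mu$ not being supported in $\bar B_L$.
\end{itemize}
The cross term arises because $\int\phi^L_t\dd\rho$ is common to both plans. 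Since $(\phi^L_t)^*$ is not Lipschitz, we estimate it by its convexity and gradient growth; an $O(L\delta+\delta^2)$ type bound, combined with the quadratic part $t|y|^2/2$, should suffice after handling it as in the bounded case. On the part of $\pi_\mu$ where $|y|\le L$, the gap estimate from the bounded case applies with $R$ replaced by $L$. That estimate is the second-order bound on the averaged Moreau--Yosida gap, $O(t^2)$ in the BV case, interpolated to $L^p$. It gives a gap of order $L^{2-2/p'\cdot(\cdot)}\,(\cS_p t)^{\dots}$. More usefully, the gap is of the form $L^{2}\,\Phi(\cS_p t/L)$, where $\Phi$ produces the exponent $\alpha_p$ after optimizing in $t$.

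The new error comes from the tail $\{|y|>L\}$ under $\mu$. Because $\psi^L=+\infty$ there, we cannot use the Fenchel gap directly. Instead we compare $T_\mu$ with $\nabla\phi^L$ in $L^2(\rho)$. By Markov's inequality with the $q$-th moment, this gives
\[
\int_{|y|>L}|y|^2\dd\mu\le A^qL^{2-q},
\]
so it contributes $A^{q/2}L^{1-q/2}$ to the norm. We also need the gap of $\phi^L$ against $\pi_\mu$: points with $|T_\mu(x)|>L$ get projected, and the gap there is bounded by $|y|^2$-type terms, which again gives $A^qL^{2-q}$.

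Assembling, we expect
\[
\norm{T_\mu-T_\nu}_{L^2(\rho)}\le\delta+C\bigl(L^{1-\alpha_p}(\cS_p\delta)^{\alpha_p}+A^{q/2}L^{1-q/2}\bigr).
\]
Choosing $L$ to balance the two terms, $L^{q/2-\alpha_p}=A^{q/2}(\cS_p\delta)^{-\alpha_p}$, yields the exponent $\alpha_p(q-2)/(q-2\alpha_p)=\beta_{p,q}$ and the prefactor $A^{1-\beta_{p,q}}$, which matches \eqref{eq:beta}. When $L\le A$ falls outside the useful regime, the bound is trivial since $\norm{T_\mu-T_\nu}\le\delta+2\cdot$(second moments). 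The constant $10$ comes from tracking $2\sqrt2$ together with the tail terms.

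The main obstacle is the truncation step. We must show that restricting slopes degrades both the gap estimate and the transport error only by the Markov tail term, and not by something involving $\nu$ (which has only a second moment). In particular, the cross term $\int(\phi^L_t)^*\dd(\nu-\mu)$ must be controlled using the slope bound $L$ on the $\mu$ side, since $(\phi^L_t)^*=+\infty$ outside $\bar B_L$. I would first try to avoid this by working with $\phi^L_t$ on the $x$-side only: use the Fenchel--Young inequality $\phi^L_t(x)+(\phi^L_t)^*(y)\ge\ip xy$ applied to $\pi_\nu$, with $(\phi^L_t)^*$ replaced by its finite extension $\psi_t^{L}$ built from $\phi^L_t$'s gradient range, so that only $|y|$-linear growth enters.
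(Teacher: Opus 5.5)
\textbf{There is a genuine gap.} Your slope truncation $\phi^L=(\phi^*+\iota_{\overline B_L})^*$ (the paper's $\phi_R$ with $R=L$), the Moreau--Yosida envelope, and your final balancing in $L$ (exponent and prefactor $A^{1-\beta_{p,q}}$) all agree with the paper. The problem is the intermediate estimate $\norm{T_\mu-T_\nu}_{L^2(\rho)}\le\delta+C\bigl(L^{1-\alpha_p}(\cS_p\delta)^{\alpha_p}+A^{q/2}L^{1-q/2}\bigr)$. That estimate is the whole content of the proof, and your route cannot produce it.

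You apply coercivity to $\pi_\nu=(\Id,T_\nu)_\#\rho$, so you must integrate $(\phi^L_t)^*=\phi^*+\iota_{\overline B_L}+\frac t2|\cdot|^2$ against $\nu$. This is $+\infty$ whenever $\nu$ charges the complement of $\overline B_L$, which is allowed since $\nu\in\cP_2(\R^d)$ is arbitrary.

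The cross term $\int(\phi^L_t)^*\dd(\nu-\mu)$ is not $O(L\delta+\delta^2)$ even for targets in a common ball. The bound $L$ constrains $\nabla\phi^L$, i.e.\ the \emph{domain} of the conjugate, not the conjugate's gradient. For example, take $\rho=\gamma_2$, $\mu=\frac12(\delta_{e_1}+\delta_{-e_1})$ and $\phi(x)=|x_1|$. Then $\phi^*$ is the indicator of a segment, and shifting both atoms by $\varepsilon e_2$ makes the cross term infinite at $W_2$-distance $\varepsilon$. Your decomposition also drops the correlation difference $\int\ip{x}{T_\mu-T_\nu}\dd\rho$.

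The proposed finite extension of $(\phi^L_t)^*$ cannot work either. Since $\phi^L_t$ is $L$-Lipschitz, $\sup_x\{\ip{x}{y}-\phi^L_t(x)\}=+\infty$ for $|y|>L$, so any finite replacement violates Fenchel--Young, and hence coercivity, there. For the same reason, on $\{|T_\mu|>L\}$ the gap of $\phi^L_t$ along $\pi_\mu$ is infinite, not of order $|y|^2$.

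\textbf{The missing idea is to keep $\mu$ in the second slot of every plan.}
\begin{enumerate}
\item Take $\sigma=\operatorname{Law}(X_\nu,Y_\mu)\in\Pi(\rho,\mu)$ from Lemma~\ref{lem:gap-distance}. Then $\norm{T_\mu-T_\nu}_{L^2(\rho)}\le\norm{T_\mu(X_\nu)-Y_\mu}_2+\delta$. The gap of the \emph{untruncated, unregularized} potential satisfies $\cG(\sigma)\le2M_\rho\delta$ with no slope assumption.
\item To pass to $\phi_{R,t}$, the paper uses the pointwise comparison \eqref{eq:smallslopes}, valid for $|y|\le R$:
\[
g_{\phi_{R,t}}(x,y)\le g_\phi(x,y)+a_{R,t}(x)+\tfrac t2\bigl(|y|^2-|T_R(x)|^2\bigr).
\]
It follows from $\phi_R^*=\phi^*$ on $\overline B_R$ and $\phi_R\le\phi$. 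After coercivity, the factor $\frac t2$ cancels against $\frac2t$.
\item Because $\sigma$ has second marginal $\mu$ and the correction is nonnegative on $\{|y|>R\}$, its integral over $\{|y|\le R\}$ is at most $\int(|T|^2-|T_R|^2)\dd\rho\le\tau_\mu(R)^2$.
\item Pairs with $|y|>R$ are handled by the crude bound $|\nabla\phi_{R,t}(x)-y|^2\le4|y|^2$.
\item Finally, $\nabla\phi_{R,t}$ is compared with $T$ in $L^2(\rho)$ using $T_R=T$ on $\{|T|\le R\}$.
\end{enumerate}
Only the tail of $\mu$ ever enters, which is why no condition on $\nu$ is needed. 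In your scheme, integrals of an extended-valued conjugate against $\nu$ appear and cannot be controlled.
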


The constants are uniform over $\nu\in\cP_2(\R^d)$, including measures
with infinite $q$-th moment in Theorem~\ref{thm:moment}.
The linear term $\delta$ accounts for large target distances.
By symmetry, either target may play the role of $\mu$.

\begin{remark}[Compact sources]\label{rem:W1}
Under \eqref{eq:source}, if $\supp\rho\subset\overline B(x_0,L)$ and
both targets lie in a common ball of radius $R$, then, with
$w=W_1(\mu,\nu)$,
\begin{equation}\label{eq:W1}
\norm{T_\mu-T_\nu}_{L^2(\rho)}
\leq(2Rw)^{1/2}+2\sqrt2\,R^{1-\alpha_p}
\biggl(\frac{L}{M_\rho}\cS_p w\biggr)^{\!\alpha_p}.
\end{equation}
See Remark~\ref{rem:W1-proof} for the derivation. For bounded, compactly
supported source densities, this recovers the known $W_1^{1/4}$ estimate,
whose sharpness in dimensions $d\geq2$ is due to
Letrouit~\cite[Theorem~2]{LetrouitUnstable}; see
Ford~\cite[Proposition~5.2]{FordQuantitative} for an extension of that construction.
\end{remark}

\begin{proposition}[Optimality of the exponents]\label{prop:sharpness}
For every $p\in(1,\infty]$, there is a fixed source on $\R^2$ satisfying
\eqref{eq:source} for which the following hold.
\begin{enumerate}[(i)]
\item There are targets $\mu_n$, $\nu_n$ in the unit ball with
$\delta_n=W_2(\mu_n,\nu_n)\to0$ such that, for every $a>\alpha_p$,
\[
\frac{\norm{T_{\mu_n}-T_{\nu_n}}_{L^2(\rho)}}{\delta_n^a}\longrightarrow\infty.
\]
\item For each $q>2$, there are targets with both $q$-moments at most one
and $\delta_n=W_2(\mu_n,\nu_n)\to0$ such that, for every $b>\beta_{p,q}$,
\[
\frac{\norm{T_{\mu_n}-T_{\nu_n}}_{L^2(\rho)}}{\delta_n^b}\longrightarrow\infty.
\]
\end{enumerate}
For $p=\infty$, the source itself has a bounded BV density.
\end{proposition}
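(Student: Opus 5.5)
The plan is to build, for each $p$, one explicit source $\rho=h\rho_0$ on $\R^2$ and a family of two-point targets. For such targets the Brenier map is piecewise constant across a separating line, which makes both $\norm{T_\mu-T_\nu}_{L^2(\rho)}$ and $W_2(\mu,\nu)$ explicitly computable. The source will consist of two (or more) separated pieces with a gap between them; this is the mechanism of Letrouit~\cite{LetrouitUnstable}. Its density will be a BV reference density, a Gaussian or a normalized indicator, multiplied by a weight $h$ that is singular along the boundary of the pieces facing the gap, $h(x)\approx \operatorname{dist}(x,\Gamma)^{-\gamma}$. The exponent is chosen so that $h\in L^p(\rho_0)$ exactly when $\gamma<1/p$. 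For $p=\infty$ we take $\gamma=0$, so the source has a bounded BV density. The exponent $\alpha_p=1/(2(p'+1))$ should come out by letting $\gamma\uparrow1/p$, which is why the statement quantifies over every $a>\alpha_p$ rather than $a=\alpha_p$.

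For (i), the targets will be $\mu=\tfrac12(\delta_{y_1}+\delta_{y_2})$ and a perturbation $\nu$ in the unit ball. We obtain $\nu$ by rotating the pair by a small angle $\theta$, or by moving a small mass $\varepsilon$ from one atom to a nearby point. For two atoms, the optimal map sends the part of $\rho$ on one side of a line orthogonal to $y_1-y_2$ to $y_1$. The position of that line is fixed by the mass constraint, using the Kantorovich potential together with $c$-cyclical monotonicity. The key computation is the following. A perturbation of size $\delta$ in $W_2$ moves or tilts the separating line by a macroscopic amount $s(\delta)$ across a thin layer near $\Gamma$. Every point in the swept region $\Sigma$ then changes its image by $|y_1-y_2|\sim1$, so
\[
\norm{T_\mu-T_\nu}_{L^2(\rho)}^2\gtrsim\rho(\Sigma).
\]
The profile $\operatorname{dist}^{-\gamma}$ gives $\rho(\Sigma)\sim w^{1-\gamma}$ for a layer of width $w$. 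The construction has to make $w$ large compared with $\delta$. To do this, we place target atoms so that the perturbation must be absorbed by mass near the singular boundary; alternatively, we use a sequence of scales $\theta_n\to0$, with the swept region pinned against the singular layer. Balancing the mass equation with $W_2^2(\mu,\nu)\sim$ (moved mass)$\times$(displacement)$^2$ should give $\norm{T_\mu-T_\nu}\gtrsim\delta^{\alpha_p+o(1)}$. This matches the upper bound in Theorem~\ref{thm:bounded}: that bound already shows that no exponent above $\alpha_p$ holds for this fixed source. We therefore only need a lower bound along a sequence, with logarithmic losses as $\gamma\to1/p$ handled by taking a countable superposition of scales inside the fixed source.

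For (ii), we reuse the same source and rescale the targets. We put a small mass $\eta$ at distance $r\gg1$, and put the perturbation there, so that the moving atom changes the transport over a large region while $\int|y|^q\dd\mu\le\eta r^q+O(1)\le1$ after normalization. With the amplitude $r$ and mass $\eta$ tied by the moment constraint, the map difference scales like $r(\rho(\Sigma))^{1/2}$ and $\delta$ like $\eta^{1/2}\times$(displacement). We then optimize $r=r(\delta)$ in the resulting inequality, exactly as the proof of Theorem~\ref{thm:moment} presumably optimizes a slope cutoff. This should reproduce $\beta_{p,q}=\alpha_p(q-2)/(q-2\alpha_p)$.

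The main obstacle is step (i): designing the gap geometry and the weight $h$ so that a $W_2$-small change genuinely produces a line displacement comparable to $\delta^{2\alpha_p}$ in mass, rather than the linear behaviour that generic two-point perturbations give. I would first try to adapt Letrouit's disconnected-support example verbatim, with uniform density on the pieces, to recover $1/4$ at $p=\infty$. Only then would I insert the boundary singularity of $h$ and recompute the mass of the swept layer.
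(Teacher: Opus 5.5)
\paragraph{Verdict.} There is a genuine gap. Your construction lacks the mechanism that produces $\alpha_p$ for $W_2$, and the geometry you describe cannot reach that exponent.

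\paragraph{Where the plan falls short.} Any two-point example works as follows. A target atom of mass $r$ at unit distance is rotated by an angle $\theta$, which costs $\delta\approx\sqrt r\,\theta$. The rotation makes a source piece of mass $m\approx r$ switch sides of the separating line, so $\norm{T_\mu-T_\nu}_{L^2(\rho)}\gtrsim\sqrt m$. This needs two features your proposal does not have.
\begin{enumerate}[(a)]
\item \textbf{Small moving atom.} You need $r\to0$. With $\mu=\tfrac12(\delta_{y_1}+\delta_{y_2})$, a rotation already costs $\delta\gtrsim\theta$. For a bounded reference density on a bounded set, H\"older's inequality limits the mass swept by the tilt to order $\Lambda\theta^{1/p'}$, plus the change $\varepsilon$ in atom masses, which costs $\delta\gtrsim\sqrt\varepsilon$. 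So half--half targets give at best $\delta^{1/(2p')}$.
\item \textbf{Long lever arm.} This is the real missing idea: the exchanged pieces must be far apart. Suppose a piece has width $\ell$ across the separating line, and the two pieces are a distance $L$ apart along it. Then the required tilt is $\theta\approx\ell/L$. By H\"older the piece carries $\rho_0$-mass $b\gtrsim m^{p'}$. A BV density carrying mass $b$ inside a strip of width $\ell$ has variation at least of order $b/\ell$ there. Hence $\ell\gtrsim m^{p'}$ up to lower-order factors; your layer $\operatorname{dist}(x,\Gamma)^{-\gamma}$ gives exactly $\ell=m^{1/(1-\gamma)}$.
\end{enumerate}
When $L$ is bounded, as for a layer along a fixed boundary $\Gamma$ or pieces of a fixed compact set, you get $\delta\approx m^{p'+1/2}$ even with small atoms. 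The exponent is then $(1-\gamma)/(3-\gamma)\to1/(2p'+1)$, not $\alpha_p=1/(2p'+2)$; for $p=\infty$ this is $1/3$ rather than $1/4$. This is also why adapting a compact Letrouit-type example does not give $1/4$. There $W_1\approx r\theta\approx m^2$ but $W_2\approx\sqrt r\,\theta\approx m^{3/2}$, so the example is sharp for $W_1^{1/4}$ but shows only $W_2^{1/3}$. A Gaussian reference does not help either: it can place reference mass $m^{p'}$ only out to distance of order $\sqrt{\log(1/m)}$.

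\paragraph{How the paper's construction works.} The paper gains the missing factor $m^{1/2}$ in $\delta$ by spending the second-moment budget on a lever arm. The fixed source is a base rectangle plus, for each $n$, two thin rectangles $Q_n^\pm$ with the following properties:
\begin{itemize}
\item $\rho$-mass $m_n/2$ and $\rho_0$-mass $b_n/2$ each, with $b_n=n^{2(p'-1)}m_n^{p'}$, so that $m_n^p/b_n^{p-1}=n^{-2}$;
\item $x_1$-width $\ell_n/2$ with $\ell_n=n^2b_n$, so that $f_0=1/(2n^2)$ there and $f_0$ is BV;
\item centers at $x_1=c_n\pm\ell_n$ and $x_2=\mp L_n$, with $L_n=1/(n\sqrt{m_n})$, so that $m_nL_n^2=n^{-2}$ is summable.
\end{itemize}
The targets are $(1-r_n)\delta_0+r_n\delta_{a_ne_1}$ and its tilt by $\eta_n=4\ell_n/L_n$. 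Here $r_n\approx m_n/2$ is the mass of $\{x_1>c_n\}$. The tilt exchanges $Q_n^+$ and $Q_n^-$, which gives
\[\norm{T_{\mu_n}-T_{\nu_n}}_{L^2(\rho)}\geq a_n\sqrt{m_n-s_n},\qquad \delta_n\lesssim a_nn^{2p'+1}m_n^{p'+1}.\]
The choice $m_n=e^{-n^2}$ makes the polynomial factors harmless for every $a>\alpha_p$.

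\paragraph{Part (ii) and a logical point.} Part (ii) uses $a_n=r_n^{-1/q}$, which matches your rescaling idea. On your geometry, however, it inherits the same loss and gives $(\tfrac12-\tfrac1q)/(p'+\tfrac12-\tfrac1q)$ instead of $\beta_{p,q}$. Finally, Theorem~\ref{thm:bounded} is an upper bound. It cannot show that no exponent above $\alpha_p$ holds; optimality must come entirely from the construction.
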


We prove Proposition~\ref{prop:sharpness} in Section~\ref{sec:sharpness}.
Thus $1/4$ is the optimal $W_2$ exponent over the class of BV sources
with finite second moment and bounded targets.

\subsection{Examples and dimension dependence}

Take $h=1$ and $p=\infty$. The table lists upper bounds with a universal
constant $C$. The isotropic log-concave source has covariance $I_d$.
\begin{center}
\begin{tabular}{@{}lccc@{}}
\toprule
Source & $M_\rho$ & $B_0$ & $\cS_\infty = M_\rho B_0$\\
\midrule
$\gamma_d$ & $\sqrt d$ & $\sqrt d$ & $d$\\
Isotropic log-concave & $\sqrt d$ & $Cd$ & $Cd^{3/2}$\\
$Z^{-1}e^{-V}\dd x$, $\kappa I\leq D^2V\leq KI$
 & $\sqrt{d/\kappa}$ & $\sqrt{Kd}$ & $d\sqrt{K/\kappa}$\\
\bottomrule
\end{tabular}
\end{center}
For the Gaussian, $B_0=\E[|G_d|]$ with $G_d\sim N(0,I_d)$. The bound $B_0\leq Cd$ for
isotropic log-concave densities goes back to Eldan and
Klartag~\cite[Lemma~11]{EldanKlartagPointwise}, who proved
$\int|\nabla f_0|\dd x\leq Cd$ for smooth densities.
Gaussian smoothing, rescaling to isotropic position and lower
semicontinuity extend the bound to BV densities, including boundary
contributions. For a different proof and a
surface-area formulation, see Eskenazis, Giannopoulos and
Tziotziou~\cite[Theorems~1.3 and~1.4]{EGTFunctionalPerimeter}.
In the last line, $V\in C^2(\R^d)$
and $0<\kappa\leq K$. By the Poincar\'e inequality,
$M_\rho^2\leq d/\kappa$, and by integration by parts,
$B_0^2\leq\int|\nabla V|^2\dd\rho=\int\Delta V\dd\rho\leq Kd$.

For targets in a ball of radius $R$, Theorem~\ref{thm:bounded} becomes
\begin{equation}\label{eq:examples}
\norm{T_\mu-T_\nu}_2\leq CR\biggl(\frac{\delta}{R}\biggr)^{\!1/4}
\begin{cases}
d^{1/4},&\rho=\gamma_d,\\
d^{3/8},&\rho\text{ isotropic log-concave},\\
d^{1/4}(K/\kappa)^{1/8},&\kappa I\leq D^2V\leq KI.
\end{cases}
\end{equation}
We absorbed the linear term using $\delta\leq2R$.
Under the common moment bound of Theorem~\ref{thm:moment}, the factor
$CR(\delta/R)^{1/4}$ becomes $CA(\delta/A)^{\beta_{\infty,q}}$.
The three dimension factors become
$d^{\beta_{\infty,q}}$, $d^{3\beta_{\infty,q}/2}$ and
$d^{\beta_{\infty,q}}(K/\kappa)^{\beta_{\infty,q}/2}$, respectively.

For $\rho=h\gamma_d$ with $\norm h_{L^p(\gamma_d)}=\Lambda$, H\"older's
inequality and Gaussian moment bounds imply
\[
M_\rho\leq C_p\Lambda^{1/2}\sqrt d,\qquad
\cS_p\leq C_p\Lambda^{p'+1/2}d.
\]
Thus, for fixed $p>1$ and both targets in a common ball of radius $R$,
\begin{equation}\label{eq:weightedGaussian}
\norm{T_\mu-T_\nu}_2\leq
C_p\Lambda^{\frac{2p'+1}{4(p'+1)}}d^{\alpha_p}
R^{1-\alpha_p}\delta^{\alpha_p}.
\end{equation}
Under a common $q$-moment bound on the targets, the same computation gives
\[
\norm{T_\mu-T_\nu}_2\leq
C_{p,q}\Lambda^{(p'+1/2)\beta_{p,q}}d^{\beta_{p,q}}
A^{1-\beta_{p,q}}\delta^{\beta_{p,q}}.
\]
Here $\delta\leq2A$ absorbs the linear term. For the uniform source on
$B(0,L)$, $LB_0=d$ and \eqref{eq:W1} gives
$\norm{T_\mu-T_\nu}_2\leq C R^{3/4}d^{1/4}W_1(\mu,\nu)^{1/4}$.

For Gaussian sources, the next construction gives a polynomial lower
bound on the dimension dependence in Theorems~\ref{thm:bounded}
and~\ref{thm:moment}.

\begin{proposition}[Dimension dependence for Gaussian sources]\label{prop:dimension}
There are universal $c,C>0$ such that, in every sufficiently large
dimension $d$, the source $\rho=\gamma_d=N(0,I_d)$ admits targets
$\mu_d$, $\nu_d$ in the unit ball satisfying
\[
W_2(\mu_d,\nu_d)\leq C d^{-1/2},\qquad
\norm{T_{\mu_d}-T_{\nu_d}}_{L^2(\gamma_d)}\geq c.
\]
Consequently, a uniform estimate $\norm{T_\mu-T_\nu}_2\leq C_dW_2(\mu,\nu)^a$
with $a>0$ requires $C_d\geq c_a d^{a/2}$.
\end{proposition}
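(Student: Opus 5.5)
Take two-point targets along a direction that the Gaussian splits in half, and tilt that direction by an angle of order $d^{-1/2}$. Let $e_1,e_2$ be orthonormal vectors, and set $\theta=d^{-1/2}$ and $u=\cos\theta\, e_1+\sin\theta\, e_2$. Define
\[
\mu_d=\tfrac12(\delta_{e_1}+\delta_{-e_1}),\qquad
\nu_d=\tfrac12(\delta_{u}+\delta_{-u}),
\]
both in the unit ball. Every Brenier map to a two-point target $\pm v$ with equal weights is $x\mapsto \operatorname{sign}(\ip{x}{v}-c)\,v$ for a threshold $c$. By symmetry of $\gamma_d$, mass balance forces $c=0$. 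So $T_{\mu_d}(x)=\operatorname{sign}(x_1)e_1$ and $T_{\nu_d}(x)=\operatorname{sign}(\ip xu)u$.

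Coupling $e_1\mapsto u$ and $-e_1\mapsto-u$ gives $W_2(\mu_d,\nu_d)\leq|e_1-u|=2\sin(\theta/2)\leq\theta=d^{-1/2}$. However, $\norm{T_{\mu_d}-T_{\nu_d}}_2^2\geq 4\,\gamma_d(\text{signs differ})\cdot(\text{const})$, and the disagreement wedge has Gaussian mass $\theta/\pi$. This only yields $\norm{T_{\mu_d}-T_{\nu_d}}_2\approx d^{-1/4}$, not a constant.

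The fix is to use many directions at once. Since only a two-dimensional marginal matters in the single-direction example, the whole difficulty is to genuinely use all $d$ coordinates. Let $\mu_d$ be uniform on the $2^d$ points $d^{-1/2}\varepsilon$ with $\varepsilon\in\{\pm1\}^d$, which lie on the unit sphere. Then $T_{\mu_d}(x)=d^{-1/2}\operatorname{sign}(x)$ coordinatewise, because the product structure and symmetry pin the thresholds at zero. Let $\nu_d$ be the image of $\mu_d$ under a rotation $O$ close to the identity, with $\norm{O-\Id}_{\mathrm{op}}\leq Cd^{-1/2}$. The identity coupling composed with $O$ gives $W_2\leq Cd^{-1/2}$. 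Because $\gamma_d$ is rotation invariant, $T_{\nu_d}=O\,T_{\mu_d}\circ O^{\top}$. Hence
\[
\norm{T_{\mu_d}-T_{\nu_d}}_2^2\;\geq\;\frac12\E\bigl|\operatorname{sign}(G)-\operatorname{sign}(O^{\top}G)\bigr|^2\frac1d-C\norm{O-\Id}^2 .
\]

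The main obstacle is choosing $O$ so that a constant fraction of the coordinates of $O^{\top}G$ change sign. I would take $O$ block-diagonal, made of $d/2$ planar rotations by angle $\theta$. In each block, by the two-dimensional computation above, the two coordinate signs flip with probability $\asymp\theta$. This gives only $\norm{\cdot}^2\asymp\theta$, so $\theta$ must be of order one. That contradicts $W_2\lesssim d^{-1/2}$ for this rotation, and I expect the true resolution to be a different choice of $O$. I would try $O=\Id-2ww^{\top}$ with $w=d^{-1/2}(1,\dots,1)$ a unit vector, or better a small rotation in the plane spanned by $w$ and a generic direction. The first is a reflection, not a near-identity rotation. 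For the second, $O^{\top}G=G-(\text{shift of size }O(1)\text{ along }w)$, and each coordinate moves by about $d^{-1/2}\cdot O(1)$. Each coordinate is then near zero with probability $\asymp d^{-1/2}$, which is again too small.

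So I would switch targets. I would keep $\mu_d$ as the hypercube points and take $\nu_d$ to be the hypercube shifted by $\eta\,d^{-1/2}$ times a suitable sign pattern. The move is designed so that the Brenier thresholds shift by order one: changing target weights is cheap in $W_2$ when points are at distance $d^{-1/2}$. Concretely, I would reweight $\mu_d$ as a product of biased Bernoulli laws with bias $\epsilon$ per coordinate. This costs $W_2^2\approx d\cdot\epsilon\cdot(d^{-1/2})^2\cdot 4=4\epsilon$ in total, and it moves every threshold by $\asymp\epsilon$. The map then changes on mass $\asymp\epsilon$ per coordinate, with jump $2d^{-1/2}$, so $\norm{\cdot}_2^2\asymp\epsilon$. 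Both sides scale alike, which again fails.

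I therefore expect the hardest step to be the correct construction, and I would first check the mechanism actually used in the paper's sharpness section. The final corollary is immediate once the construction exists: $c\leq C_d(Cd^{-1/2})^a$ gives $C_d\geq c_a d^{a/2}$.
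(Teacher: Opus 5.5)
Your proposal has a genuine gap: it never produces targets with $W_2\lesssim d^{-1/2}$ and $\norm{T_{\mu_d}-T_{\nu_d}}_2\gtrsim1$. Every candidate you test fails, as you observe yourself. Consulting the paper's sharpness construction would not help either, since that example is a fixed source in $\R^2$ and only concerns exponents. Only your final deduction $C_d\geq c_ad^{a/2}$ from the construction is complete.

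The missing idea is to combine an exponentially large, well-separated set of target points with Gaussian noise stability. Your block rotation is in fact the right perturbation; the choice of points is what fails.

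\emph{The perturbation.} Write $d=2n$. Let $\mu_d$ be uniform on $a_i=(v_i,0)$ and $\nu_d$ uniform on $b_i=(rv_i,\sqrt{1-r^2}\,v_i)$. This rotates coordinate $j$ toward the unused coordinate $n+j$, so $W_2(\mu_d,\nu_d)^2\leq|a_i-b_i|^2=2(1-r)$. Set $I(x)=\arg\max_i\ip{v_i}{x}$, and suppose each cell $A_i=\{I=i\}$ has Gaussian mass $1/N$. Then $T_{\mu_d}(X,Z)=a_{I(X)}$ and $T_{\nu_d}(X,Z)=b_{I(Y)}$, where $Y=rX+\sqrt{1-r^2}\,Z$ is an $r$-correlated copy of $X$.

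\emph{Why the hypercube fails.} The sign pattern of $Y$ differs from that of $X$ in about $n\arccos(r)/\pi$ coordinates. Each such change costs only $4/n$ in squared norm. The squared change is therefore of order $\sqrt{1-r}\asymp d^{-1/2}$, which is the failure you found. The cell does change, but adjacent centers are too close.

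\emph{The paper's choice of points.} The paper takes $v_i$ to be the normalized sign vectors of a binary linear code of dimension $\lfloor n/4\rfloor$ whose nonzero words have weight above $n/8$. Such a code exists by a union bound over random linear maps. These $N=2^{\lfloor n/4\rfloor}$ unit vectors satisfy $|v_i-v_j|^2\geq1/2$. Coordinate sign changes permute them transitively, which gives the equal cell masses.

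\emph{Conclusion.} Nelson's hypercontractivity gives
$\PP[I(X)=I(Y)]\leq\sum_i\gamma_n(A_i)^{2/(1+r)}=N^{-(1-r)/(1+r)}$.
This equals $e^{-1}$ for $r=(\log N-1)/(\log N+1)$, so $1-r\asymp1/d$. For $i\neq j$, $|a_i-b_j|^2=2(1-r)+r|v_i-v_j|^2\geq r/2$. Hence $\norm{T_{\mu_d}-T_{\nu_d}}_2^2\geq\frac r2(1-e^{-1})$, while $W_2^2\leq4/(\log N+1)\asymp1/d$. Odd $d$ is handled by an extra independent coordinate on which both maps are constant.
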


\begin{proof}
First let $d=2n$. Choose a binary linear code $\mathcal C\subset\{0,1\}^n$
of dimension $k=\lfloor n/4\rfloor$ whose nonzero words have weight
greater than $n/8$. Such codes exist for large $n$: for a random linear
map from $\{0,1\}^k$ to $\{0,1\}^n$, a union bound gives a failure
probability at most $2^{k-n(1-H_2(1/8))}$, which tends to zero; here
$H_2$ is the binary entropy and $1-H_2(1/8)>1/4$.

Write the associated sign vectors as
$v_i=n^{-1/2}((-1)^{c_j})_{j=1}^n$, $1\leq i\leq N=2^k$.
They have unit norm, satisfy $|v_i-v_j|^2\geq1/2$ for $i\ne j$,
and are permuted transitively by orthogonal coordinate sign changes.
Put
\[
r=\frac{\log N-1}{\log N+1},\qquad
a_i=(v_i,0),\qquad b_i=(rv_i,\sqrt{1-r^2}\,v_i),
\]
and let $\mu_d$, $\nu_d$ be uniform on the $a_i$, $b_i$, respectively.
For independent standard Gaussian vectors $X,Z\in\R^n$, set
$Y=rX+\sqrt{1-r^2}Z$ and $I(x)=\arg\max_i\ip{v_i}{x}$.
Ties have probability zero, and transitivity implies
$\PP[I(X)=i]=1/N$. The gradients of the two maxima of linear functions
are therefore the Brenier maps:
\[
T_{\mu_d}(X,Z)=a_{I(X)},\qquad T_{\nu_d}(X,Z)=b_{I(Y)}.
\]
Nelson's Gaussian hypercontractivity theorem
(see~\cite[Theorem~2.1]{ChafaiLehecLSI}) implies, for the cells
$A_i=\{I=i\}$,
\[
\PP[I(X)=I(Y)]
\leq\sum_i\gamma_n(A_i)^{2/(1+r)}
=N^{-(1-r)/(1+r)}=e^{-1}.
\]
Since $|a_i-b_j|^2=2(1-r)+r|v_i-v_j|^2$, diagonal matching is optimal
between the two uniform targets. Therefore
\[
W_2(\mu_d,\nu_d)^2=\frac4{\log N+1},\qquad
\norm{T_{\mu_d}-T_{\nu_d}}_2^2\geq\frac r2(1-e^{-1}).
\]
For odd dimensions, add an independent Gaussian coordinate on which
both maps are constant.
\end{proof}

These targets have moments of every order bounded by one, so the lower
bound applies to both main theorems. For exponent $1/4$, the constant is
bounded below by $cd^{1/8}$ and above by $Cd^{1/4}$. For the moment
estimate with exponent $\beta_{\infty,q}$, the bounds are
$cd^{\beta_{\infty,q}/2}$ and $Cd^{\beta_{\infty,q}}$, respectively.
The optimal power of the dimension between these bounds is open.

\section{One bounded target}\label{sec:bounded}

For a proper lower semicontinuous convex function $u$, write
$u^*(y)=\sup_x\{\ip{x}{y}-u(x)\}$ and
$g_u(x,y)=u(x)+u^*(y)-\ip{x}{y}$ for its Fenchel gap.
Let $\phi$ be a Brenier potential from $\rho$ to $\mu$, and set
$\sigma_*=(\Id,T_\mu)_\#\rho$. For $\sigma\in\Pi(\rho,\mu)$, define
\begin{equation}\label{eq:gap}
\begin{aligned}
\cG(\sigma)&=\int g_\phi\dd\sigma\\
&=\int\ip{x}{T_\mu(x)}\dd\rho(x)
-\int\ip{x}{y}\dd\sigma(x,y)\geq0.
\end{aligned}
\end{equation}
The second equality follows from Fenchel equality on $\sigma_*$.
Thus $\cG(\sigma)$ is the difference between the optimal correlation
and the correlation under $\sigma$, and $\cG(\sigma_*)=0$.

When $\supp\mu\subset\overline B(0,R)$, we choose $\phi$ globally convex
and $R$-Lipschitz by taking the supremum of its supporting affine
functions with slopes in this ball. Define its Moreau--Yosida
regularization and the corresponding gap by
\begin{equation}\label{eq:regularizedgap}
\phi_t(x)=\inf_z{\biggl\{\phi(z)+\frac{|x-z|^2}{2t}\biggr\}},\qquad
\cG_t(\sigma)=\int g_{\phi_t}\dd\sigma,\quad t>0.
\end{equation}
The envelope is convex and continuously differentiable, and
\begin{equation}\label{eq:Moreau-dual}
\phi_t^*(y)=\phi^*(y)+\frac t2|y|^2.
\end{equation}
For the properties of the envelope, see
Moreau~\cite{MoreauProximite} and
Bauschke--Combettes~\cite[Chapter~12]{BauschkeCombettesConvex}.
The proof has three steps: estimate the regularization error, use
coercivity of the regularized gap, and bound the original gap by the
target distance through a suitable coupling.

\begin{lemma}[Stability of the duality gap]\label{lem:gap-stability}
Assume \eqref{eq:source}, and set $H_p=\Lambda B_0^{1/p'}$.
For any convex $R$-Lipschitz function $\phi$, define almost everywhere
\begin{equation}\label{eq:pointwise-bias}
a_t=\phi_t-\phi+\frac t2|\nabla\phi|^2,\qquad
A_t=\int a_t\dd\rho.
\end{equation}
Then
\begin{equation}\label{eq:bias}
A_t\leq\frac{H_p}{2\,3^{1/p'}}
R^{2+1/p'}t^{1+1/p'}.
\end{equation}
If $\nabla\phi$ pushes $\rho$ to $\mu$, then for every
$\sigma\in\Pi(\rho,\mu)$,
\begin{equation}\label{eq:gap-stability}
0\leq\cG_t(\sigma)-\cG(\sigma)
=\cG_t(\sigma_*)=A_t.
\end{equation}
\end{lemma}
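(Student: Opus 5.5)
We first prove the identity \eqref{eq:gap-stability} by direct computation. By \eqref{eq:Moreau-dual},
\[
g_{\phi_t}(x,y)-g_\phi(x,y)=\phi_t(x)-\phi(x)+\tfrac t2|y|^2 .
\]
Integrating against any $\sigma\in\Pi(\rho,\mu)$ uses only the marginals. Since $\nabla\phi_\#\rho=\mu$, we have $\int|y|^2\dd\mu=\int|\nabla\phi|^2\dd\rho$, so $\cG_t(\sigma)-\cG(\sigma)=A_t$ for every such $\sigma$. Taking $\sigma=\sigma_*$ and using $\cG(\sigma_*)=0$ gives $\cG_t(\sigma_*)=A_t$.

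Nonnegativity is pointwise. With $g=\nabla\phi(x)$, convexity gives $\phi(z)\geq\phi(x)+\ip{g}{z-x}$. Minimizing $\ip{g}{z-x}+|x-z|^2/(2t)$ over $z$ gives $-t|g|^2/2$, hence $a_t\geq0$.

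For the bias bound \eqref{eq:bias}, we first record a pointwise upper bound. Let $z$ be the proximal point, so that $\nabla\phi_t(x)=(x-z)/t\in\partial\phi(z)$ and $\phi_t(x)=\phi(z)+\frac t2|\nabla\phi_t(x)|^2$. Convexity at $z$ gives $\phi(x)\geq\phi(z)+t|\nabla\phi_t(x)|^2$, hence
\[
0\leq a_t\leq \tfrac t2\bigl(|\nabla\phi|^2-|\nabla\phi_t|^2\bigr)\leq \tfrac t2R^2 .
\]
The same expression arises from the Hamilton--Jacobi identity $\partial_s\phi_s=-\frac12|\nabla\phi_s|^2$, which yields
\[
a_t=\tfrac12\int_0^t\bigl(|\nabla\phi|^2-|\nabla\phi_s|^2\bigr)\dd s .
\]
We reduce to the BV reference measure by H\"older's inequality:
\[
A_t=\int a_t h\,f_0\dd x\leq\Lambda\Bigl(\int a_t^{p'}\dd\rho_0\Bigr)^{1/p'}
\leq\Lambda\Bigl(\tfrac{tR^2}{2}\Bigr)^{1-1/p'}\Bigl(\int a_t\dd\rho_0\Bigr)^{1/p'} .
\]
It therefore suffices to prove the case $p=\infty$, namely
\[
\int a_t\dd\rho_0\leq\tfrac16B_0R^3t^2 .
\]
Then $(1/2)^{1-1/p'}(1/6)^{1/p'}=1/(2\cdot3^{1/p'})$ gives exactly the constant in \eqref{eq:bias}.

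The main step is this BV estimate. By the Hamilton--Jacobi formula, it is enough to show
\[
\int\bigl(|\nabla\phi|^2-|\nabla\phi_s|^2\bigr)f_0\dd x\leq cB_0R^3s
\]
with a constant $c$ that integrates to $1/6$. The mechanism is integration by parts against a shift. For a constant vector $w$,
\[
\int\bigl(\nabla\phi(x+w)-\nabla\phi(x)\bigr)f_0(x)\dd x=\int\nabla\phi(x)\bigl(f_0(x-w)-f_0(x)\bigr)\dd x ,
\]
whose norm is at most $R|w|B_0$ after mollifying $f_0$ and passing to the limit. Writing $|\nabla\phi|^2-|\nabla\phi_s|^2=\ip{\nabla\phi(x)-\nabla\phi(z)}{\nabla\phi(x)+\nabla\phi_s(x)}$ with $z=x-s\nabla\phi_s(x)$, the displacement $x-z$ has length at most $sR$ and the other factor has norm at most $2R$. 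This gives the $R^3s$ scaling.

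The difficulty is that the shift $s\nabla\phi_s(x)$ depends on $x$. I would first regularize $\phi$ to be smooth, then write $\nabla\phi(x)-\nabla\phi(z)=\int_0^1D^2\phi(z+\tau(x-z))(x-z)\dd\tau$ and integrate $D^2\phi$ by parts against $f_0$ along the interpolating maps $x\mapsto x-\tau s\nabla\phi_s(x)$. These maps are contractions, and monotonicity of $\nabla\phi$ keeps the integrand signed. Controlling the Jacobians of these maps, or replacing them by an average over constant shifts, is where I expect the real work and the sharp constant $1/6$ to lie. A fallback is a one-dimensional reduction along the segment from $z$ to $x$, where the inner product is monotone in $\tau$.
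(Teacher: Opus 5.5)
Your proof of the identity \eqref{eq:gap-stability}, of the pointwise bounds $0\leq a_t\leq tR^2/2$, and of the H\"older reduction with the constant $1/(2\cdot3^{1/p'})$ is correct and matches the paper. The genuine gap is the main step, $\int a_t\dd\rho_0\leq R^3B_0t^2/6$, which you sketch but do not prove. The shift identity needs a constant displacement and a constant test vector. In $\ip{\nabla\phi(x)-\nabla\phi(z)}{\nabla\phi(x)+\nabla\phi_s(x)}$, both the displacement $x-z=s\nabla\phi_s(x)$ and the second factor depend on $x$. Transporting $f_0$ through $x\mapsto x-\tau s\nabla\phi_s(x)$ produces Jacobian factors whose derivatives involve $D^3\phi_s$, and these are not controlled by $R$, $s$ and $B_0$. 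Even the heuristic count, $|x-z|\leq sR$ times $|\nabla\phi+\nabla\phi_s|\leq2R$, gives $c=2$, whereas \eqref{eq:bias} requires $c=2/3$. So the route would lose a factor $3$ even if it could be made rigorous.

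The missing idea is to use the Hamilton--Jacobi equation a second time rather than compare gradients at two points. For smooth convex $R$-Lipschitz $\phi$, one has $\partial_r\nabla\phi_r=-D^2\phi_r\nabla\phi_r$, so
\[
|\nabla\phi|^2-|\nabla\phi_s|^2=2\int_0^s\ip{\nabla\phi_r}{D^2\phi_r\nabla\phi_r}\dd r .
\]
Equivalently, with $A(s)=\int a_s\dd\rho_0$, we have $A(0)=A'(0)=0$ and $A''(s)=\int\ip{\nabla\phi_s}{D^2\phi_s\nabla\phi_s}f_0\dd x$. This integrand involves the Hessian at a single point. Since $D^2\phi_s\geq0$, its trace dominates its Rayleigh quotient, so
\[
\operatorname{div}\bigl(|\nabla\phi_s|^2\nabla\phi_s\bigr)
=2\ip{\nabla\phi_s}{D^2\phi_s\nabla\phi_s}+|\nabla\phi_s|^2\Delta\phi_s
\geq3\ip{\nabla\phi_s}{D^2\phi_s\nabla\phi_s}.
\]
Integrate by parts against $Df_0$; cutoffs on balls of radius $a$ cost $O(R^3/a)$. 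Using $|\nabla\phi_s|\leq R$ then gives $A''(s)\leq R^3B_0/3$, hence $A(t)=\int_0^t(t-s)A''(s)\dd s\leq R^3B_0t^2/6$. The factor $3$ in the divergence inequality is exactly what produces $c=2/3$ in your formulation. The general convex Lipschitz case follows by mollifying $\phi$. The envelopes converge uniformly, the gradients of the initial potentials converge almost everywhere and stay bounded by $R$, and dominated convergence concludes.
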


\begin{proof}
By \eqref{eq:Moreau-dual} and Fenchel equality,
$a_t(x)=g_{\phi_t}(x,\nabla\phi(x))\geq0$ almost everywhere.
Since $\phi_t\leq\phi$ and $|\nabla\phi|\leq R$, we also have
$0\leq a_t\leq tR^2/2$. If $(\nabla\phi)_\#\rho=\mu$, the
common marginals and \eqref{eq:Moreau-dual} give
\[
\cG_t(\sigma)-\cG(\sigma)
=\int(\phi_t-\phi)\dd\rho+\frac t2\int|y|^2\dd\mu=A_t.
\]
Evaluating at $\sigma_*$ proves \eqref{eq:gap-stability}.
All integrals are finite: the potentials have linear growth and
$\phi^*$ is integrable against $\mu$ by Fenchel equality on $\sigma_*$.

We next prove the second-order estimate
\begin{equation}\label{eq:BV-bias}
\int a_t\dd\rho_0\leq\frac{R^3B_0}{6}t^2.
\end{equation}
First take $\rho=\rho_0$ and suppose that $\phi$ is smooth.
The Moreau envelope satisfies
the Hamilton--Jacobi equation
\[
\partial_s\phi_s+\frac12|\nabla\phi_s|^2=0,\qquad \phi_0=\phi.
\]
Differentiating in time, we find $A_0=A'_0=0$ and
\[
A''_s=\int\ip{\nabla\phi_s}{D^2\phi_s\nabla\phi_s}\,f_0\dd x.
\]
Since $D^2\phi_s\geq0$, we have
\[
\operatorname{div}(|\nabla\phi_s|^2\nabla\phi_s)
\geq3\ip{\nabla\phi_s}{D^2\phi_s\nabla\phi_s}.
\]
Integration by parts and the bound $|\nabla\phi_s|\leq R$ imply
\begin{equation}\label{eq:bias-second-derivative}
0\leq A''_s
\leq-\frac13\int |\nabla\phi_s|^2\nabla\phi_s\cdot\dd Df_0
\leq\frac{R^3B_0}{3}.
\end{equation}
Integrating twice in time, we obtain
\[
A_t=\int_0^t(t-s)A''_s\dd s
\leq\frac{R^3B_0}{6}t^2.
\]
In the integration by parts, cutoffs on balls of radius $a$
produce an error bounded by $CR^3/a$, which tends to zero.
For a general convex $R$-Lipschitz $\phi$, apply the estimate to smooth
convolutions. As the mollification parameter vanishes, the potentials
and their envelopes converge uniformly, while the gradients of the initial
potentials converge almost everywhere to $\nabla\phi$ and remain
bounded by $R$. Dominated convergence proves \eqref{eq:BV-bias} for
the original convex Lipschitz potential.

For $\rho=h\rho_0$ and $1<p<\infty$, H\"older's inequality gives
\[
A_t=\int a_t h\dd\rho_0
\leq\Lambda\biggl(\int a_t^{p'}\dd\rho_0\biggr)^{\!1/p'}.
\]
Using $0\leq a_t\leq tR^2/2$ and \eqref{eq:BV-bias}, we have
\[
\int a_t^{p'}\dd\rho_0
\leq\biggl(\frac{tR^2}{2}\biggr)^{\!p'-1}\int a_t\dd\rho_0
\leq\biggl(\frac{tR^2}{2}\biggr)^{\!p'-1}\frac{R^3B_0t^2}{6}.
\]
Taking the $1/p'$ power and using $(p'-1)/p'=1/p$, we conclude that
\[
A_t\leq\Lambda
\biggl(\frac{tR^2}{2}\biggr)^{\!1/p}
\biggl(\frac{R^3B_0t^2}{6}\biggr)^{\!1/p'}
=\frac{H_p}{2\,3^{1/p'}}R^{2+1/p'}t^{1+1/p'}.
\]
For $p=\infty$, the same bound follows directly from
$A_t\leq\Lambda\int a_t\dd\rho_0$ and \eqref{eq:BV-bias}, with $p'=1$.
\end{proof}

\begin{lemma}[Coercivity of the regularized gap]\label{lem:coercivity}
For $t>0$, the regularized potential satisfies
\begin{equation}\label{eq:smooth}
\norm{\nabla\phi_t}_\infty\leq R,\qquad
\Lip(\nabla\phi_t)\leq\frac1t=:L_t.
\end{equation}
For every $x,y\in\R^d$,
\begin{equation}\label{eq:Fenchel}
g_{\phi_t}(x,y)\geq\frac t2|y-\nabla\phi_t(x)|^2.
\end{equation}
Integrating against any $\sigma\in\Pi(\rho,\mu)$, we obtain
\begin{equation}\label{eq:coercivity}
\int|y-\nabla\phi_t(x)|^2\dd\sigma(x,y)\leq\frac2t\cG_t(\sigma).
\end{equation}
\end{lemma}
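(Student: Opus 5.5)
The plan is to work entirely on the dual side, using the conjugacy identity \eqref{eq:Moreau-dual}: $\phi_t^*=\phi^*+\frac t2|\cdot|^2$. Since $\phi^*$ is convex, $\phi_t^*$ is $t$-strongly convex. The two bounds in \eqref{eq:smooth} and the coercivity \eqref{eq:Fenchel} then follow from standard facts about conjugates of strongly convex functions. The integrated form \eqref{eq:coercivity} is immediate afterwards.

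\emph{Gradient bound.} Recall that $\phi$ was chosen globally convex and $R$-Lipschitz, with $\phi^*=+\infty$ outside $\overline B(0,R)$. Then $\phi_t^*$ has the same effective domain, contained in $\overline B(0,R)$. Since $\phi_t$ is $C^1$ and $\nabla\phi_t(x)\in\partial\phi_t(x)$, we have $x\in\partial\phi_t^*(\nabla\phi_t(x))$, so $\nabla\phi_t(x)$ lies in the domain of $\phi_t^*$. This gives $|\nabla\phi_t|\leq R$. A direct primal check also works: the envelope is an infimum of the functions $x\mapsto\phi(z)+|x-z|^2/(2t)$. Alternatively, $\nabla\phi_t(x)=(x-\mathrm{prox}_{t\phi}(x))/t\in\partial\phi(\mathrm{prox}_{t\phi}(x))$, and every subgradient of an $R$-Lipschitz function has norm at most $R$.

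\emph{Lipschitz bound.} The strong convexity of $\phi_t^*$ means that $\partial\phi_t^*$ is $t$-strongly monotone: $\ip{u-v}{y-y'}\geq t|y-y'|^2$ for $u\in\partial\phi_t^*(y)$ and $v\in\partial\phi_t^*(y')$. Applying this with $y=\nabla\phi_t(x)$, $y'=\nabla\phi_t(x')$, $u=x$, $v=x'$, and then using Cauchy--Schwarz, gives $|\nabla\phi_t(x)-\nabla\phi_t(x')|\leq|x-x'|/t$. This is the claim $\Lip(\nabla\phi_t)\leq1/t$. The same conclusion also follows from firm nonexpansiveness of the proximal map, citing Bauschke--Combettes.

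\emph{Coercivity.} Fix $x$ and set $y_0=\nabla\phi_t(x)$. By Fenchel equality, $g_{\phi_t}(x,y_0)=0$ and $x\in\partial\phi_t^*(y_0)$. Strong convexity of $\phi_t^*$ gives
\[
\phi_t^*(y)\geq\phi_t^*(y_0)+\ip{x}{y-y_0}+\frac t2|y-y_0|^2 .
\]
Adding $\phi_t(x)-\ip{x}{y}$ to both sides and using Fenchel equality at $(x,y_0)$ yields \eqref{eq:Fenchel}. When $\phi_t^*(y)=+\infty$ the inequality is trivial. Integrating \eqref{eq:Fenchel} against $\sigma$ gives \eqref{eq:coercivity}. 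The integral $\cG_t(\sigma)$ is finite by Lemma~\ref{lem:gap-stability}, and both sides are integrals of nonnegative functions, so no integrability issue arises.

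None of these steps is a serious obstacle. The only point needing care is the bookkeeping in the strong-convexity step when $y$ lies outside the domain of $\phi^*$, and that case is trivial.
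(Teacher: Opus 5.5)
Your proof is correct but works on the dual side, while the paper's proof is primal. The paper gets the gradient bound from the proximal optimality condition $\nabla\phi_t(x)=(x-z_t(x))/t\in\partial\phi(z_t(x))$. It gets the Lipschitz bound from monotonicity of $\partial\phi$ at the proximal points. It gets \eqref{eq:Fenchel} by inserting the upper quadratic bound $\phi_t(z)\leq\phi_t(x)+\ip{\nabla\phi_t(x)}{z-x}+|z-x|^2/(2t)$ into the definition of $\phi_t^*(y)$ at $z=x+t(y-\nabla\phi_t(x))$.

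Your strong monotonicity of $\partial\phi_t^*=\partial\phi^*+t\Id$ gives exactly the paper's inequality $t|\nabla\phi_t(x)-\nabla\phi_t(x')|^2\leq\ip{\nabla\phi_t(x)-\nabla\phi_t(x')}{x-x'}$, because $x-t\nabla\phi_t(x)=z_t(x)\in\partial\phi^*(\nabla\phi_t(x))$. So that step is the same argument in dual form.

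The real difference is the coercivity step. You read \eqref{eq:Fenchel} off the $t$-strong convexity of $\phi^*+\frac t2|\cdot|^2$ at $y_0=\nabla\phi_t(x)$. This is shorter and does not use the Lipschitz bound, but it relies on the explicit conjugate formula \eqref{eq:Moreau-dual}. The paper's argument uses only $\Lip(\nabla\phi_t)\leq L_t$: it is the generic smoothness-implies-coercivity argument of Li and Nochetto. It therefore carries over unchanged to regularizations without a conjugate formula, such as the heat semigroup in Remark~\ref{rem:heat}, where only $\Lip(\nabla P_t\phi)\leq R/\sqrt{\pi t}$ is available.

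One aside in your gradient step is inaccurate as written. An infimum over $z$ of the quadratics $x\mapsto\phi(z)+|x-z|^2/(2t)$ is not obviously $R$-Lipschitz. Write instead $\phi_t(x)=\inf_w\{\phi(x-w)+|w|^2/(2t)\}$, an infimum of $R$-Lipschitz functions. Your main domain argument does not need this.
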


\begin{proof}
Let $z_t(x)$ be the unique minimizer in \eqref{eq:regularizedgap}.
The optimality condition gives
\[
\nabla\phi_t(x)=\frac{x-z_t(x)}t\in\partial\phi(z_t(x)),
\]
so its norm is at most $R$. Monotonicity of $\partial\phi$ implies
\[
t|\nabla\phi_t(x)-\nabla\phi_t(x')|^2
\leq\ip{\nabla\phi_t(x)-\nabla\phi_t(x')}{x-x'},
\]
which proves the Lipschitz bound. In the definition of $\phi_t^*(y)$,
use the inequality
\[
\phi_t(z)\leq\phi_t(x)+\ip{\nabla\phi_t(x)}{z-x}
+\frac1{2t}|z-x|^2
\]
at $z=x+t(y-\nabla\phi_t(x))$ to obtain \eqref{eq:Fenchel}.
Integration proves \eqref{eq:coercivity}. This is the
smoothness--strong-convexity argument used by Li and
Nochetto~\cite[Lemma~2.2 and Proposition~3.1]{LiNochettoQuantitative}.
For proximal versions of the Fenchel--Young inequality with a remainder,
see Carlier~\cite{CarlierFenchelYoung}.
\end{proof}

\begin{lemma}[The target distance controls the gap]\label{lem:gap-distance}
Let $\mu,\nu\in\cP_2(\R^d)$ and $\delta=W_2(\mu,\nu)$.
There exist jointly distributed $(X_\nu,Y_\nu,Y_\mu)$ with
$X_\nu\sim\rho$, $Y_\nu=T_\nu(X_\nu)$ and $(Y_\mu,Y_\nu)$ an optimal
coupling of the targets, such that $\sigma=\operatorname{Law}(X_\nu,Y_\mu)$
satisfies
\begin{equation}\label{eq:gap-distance}
\cG(\sigma)\leq2M_\rho\delta.
\end{equation}
Here $\cG$ is defined by \eqref{eq:gap}.
\end{lemma}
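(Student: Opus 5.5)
Construct the triple by gluing: let $X_\nu\sim\rho$ and $Y_\nu=T_\nu(X_\nu)$, so $Y_\nu\sim\nu$. Take an optimal coupling $\pi$ of $(\mu,\nu)$ and disintegrate it with respect to its $\nu$-marginal. Conditionally on $(X_\nu,Y_\nu)$, draw $Y_\mu$ from $\pi(\cdot\mid Y_\nu)$. This is the standard gluing lemma. It gives $(Y_\mu,Y_\nu)\sim\pi$, $\E|Y_\mu-Y_\nu|^2=\delta^2$, and $\sigma=\operatorname{Law}(X_\nu,Y_\mu)\in\Pi(\rho,\mu)$.

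To bound the gap, use the second expression in \eqref{eq:gap}:
\[
\cG(\sigma)=\E\ip{X}{T_\mu(X)}-\E\ip{X_\nu}{Y_\mu},\qquad X\sim\rho .
\]
Since $X$ and $X_\nu$ have the same law, the first term is $\E\ip{X_\nu}{T_\mu(X_\nu)}$. Write
\[
\cG(\sigma)=\bigl(\E\ip{X_\nu}{T_\mu(X_\nu)}-\E\ip{X_\nu}{Y_\nu}\bigr)+\E\ip{X_\nu}{Y_\nu-Y_\mu}.
\]
Mean subtraction is harmless in each bracket. Every coupling of $\rho$ with a fixed target gives the same value of $\E\ip{m_\rho}{Y}$, so we may replace $X_\nu$ by $X_\nu-m_\rho$ in both terms. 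The second term is then at most $M_\rho\delta$ by Cauchy--Schwarz.

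For the first bracket, use optimality of $T_\nu$ for $\nu$. The couplings $(X,T_\nu(X))$ and $(X,T_\mu(X))$ compare as
\[
\E\ip{X}{T_\mu(X)}-\E\ip{X}{T_\nu(X)}\leq \sup_{\tau\in\Pi(\rho,\mu)}\int\ip{x}{y}\dd\tau-\sup_{\tau\in\Pi(\rho,\nu)}\int\ip{x}{y}\dd\tau .
\]
Take $\tau_*=(\Id,T_\mu)_\#\rho$ and glue it with $\pi$, now disintegrating with respect to the $\mu$-marginal. This yields $(X',Y'_\mu,Y'_\nu)$ with $X'\sim\rho$, $Y'_\mu=T_\mu(X')$ and $(Y'_\mu,Y'_\nu)\sim\pi$. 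The coupling $\operatorname{Law}(X',Y'_\nu)$ lies in $\Pi(\rho,\nu)$, so the optimal correlation for $\nu$ is at least $\E\ip{X'}{Y'_\nu}$. Hence
\[
\text{first bracket}\leq \E\ip{X'-m_\rho}{Y'_\mu-Y'_\nu}\leq M_\rho\delta .
\]
Adding the two bounds gives $\cG(\sigma)\leq 2M_\rho\delta$.

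The only delicate point is integrability, needed to justify the mean-centering and the rearrangements. Everything is finite because $\rho,\mu,\nu\in\cP_2$, so all correlations are finite by Cauchy--Schwarz. Optimality of the Brenier map for the correlation functional is the standard fact: the Brenier plan maximizes $\int\ip{x}{y}$ over $\Pi(\rho,\nu)$. I expect no real obstacle, only careful bookkeeping of the two separate gluings. The first gluing builds $\sigma$ itself; the second is an auxiliary device used only to compare optimal correlations.
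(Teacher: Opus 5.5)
Your proof is correct and is essentially the paper's argument. The paper glues once to get $(X_\mu,X_\nu,Y_\mu,Y_\nu)$, whose two three-variable marginals are your two gluings, and uses the same optimality inequality $\E\ip{X_\mu}{Y_\nu}\leq\E\ip{X_\nu}{Y_\nu}$; your two centered pieces add up to exactly $\E\ip{X_\mu-X_\nu}{Y_\mu-Y_\nu}$, which the paper bounds in one step by Cauchy--Schwarz and $\norm{X_\mu-X_\nu}_2\leq2M_\rho$.
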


\begin{proof}
Glue a $W_2$-optimal target coupling to the two optimal graph plans to
obtain $(X_\mu,X_\nu,Y_\mu,Y_\nu)$ with $X_\eta\sim\rho$ and
$Y_\eta=T_\eta(X_\eta)$ for $\eta\in\{\mu,\nu\}$.
Optimality of $T_\nu$ gives
$\E[\ip{X_\mu}{Y_\nu}]\leq\E[\ip{X_\nu}{Y_\nu}]$. Hence
\begin{align*}
\cG(\sigma)
&=\E[\ip{X_\mu-X_\nu}{Y_\mu}]\\
&\leq\E[\ip{X_\mu-X_\nu}{Y_\mu-Y_\nu}]\\
&\leq\norm{X_\mu-X_\nu}_2\norm{Y_\mu-Y_\nu}_2
\leq2M_\rho\delta.
\end{align*}
The last bound uses
$\norm{X_\mu-X_\nu}_2\leq\norm{X_\mu-m_\rho}_2+
\norm{X_\nu-m_\rho}_2$.
\end{proof}

\begin{remark}[A one-sided support condition]\label{rem:one-sided}
For bounded, compactly supported source densities and bounded $\mu$,
M\'erigot's coercivity estimate~\cite[Theorem~3.4]{MerigotSharpStability}
combined with Lemma~\ref{lem:gap-distance} gives
$\norm{T_\mu-T_\nu}_{L^2(\rho)}\leq\delta+C\delta^{1/4}$ for arbitrary
$\nu\in\cP_2(\R^d)$, where $\delta=W_2(\mu,\nu)$.
\end{remark}

\begin{proof}[Proof of Theorem~\ref{thm:bounded}]
Translate both targets to arrange $\supp\mu\subset\overline B(0,R)$.
Choose the coupling $\sigma$ from Lemma~\ref{lem:gap-distance}.
Applying Lemma~\ref{lem:coercivity} to $\sigma$ and $\sigma_*$ gives
\begin{align}
\int|T_\mu(x)-y|^2\dd\sigma(x,y)
&\leq4L_t\bigl(\cG_t(\sigma_*)+\cG_t(\sigma)\bigr)\notag\\
&\leq4L_t\cG(\sigma)+8L_tA_t,\label{eq:graph}
\end{align}
where the second inequality is Lemma~\ref{lem:gap-stability}.
Since $Y_\nu=T_\nu(X_\nu)$ and
$\norm{Y_\mu-Y_\nu}_2=\delta$, the $L^2$ triangle inequality implies,
with $D=\norm{T_\mu-T_\nu}_{L^2(\rho)}$,
\begin{equation}\label{eq:balance}
(D-\delta)_+^2\leq\frac{8M_\rho\delta}{t}
+\frac4{3^{1/p'}}H_pR^{2+1/p'}t^{1/p'}.
\end{equation}
For $\delta>0$, choose
\[
s=\biggl(\frac{M_\rho\delta}{H_pR^{2+1/p'}}\biggr)^{\!p'/(p'+1)},
\qquad t=2s.
\]
Since $(2/3)^{1/p'}\leq1$, the right-hand side of
\eqref{eq:balance} is at most $8M_\rho\delta/s$.
Using $M_\rho H_p^{p'}=\cS_p$ and taking square roots gives
\eqref{eq:bounded}. When $\delta=0$, the two maps agree by uniqueness.
\end{proof}

\begin{remark}[$W_1$ stability for compact sources]\label{rem:W1-proof}
To prove \eqref{eq:W1}, write $w=W_1(\mu,\nu)$ and use the gluing
construction of Lemma~\ref{lem:gap-distance} with a $W_1$-optimal target
coupling. Since $|X_\mu-X_\nu|\leq2L$, the same gap estimate gives
\[
\cG(\sigma)\leq2L\E[|Y_\mu-Y_\nu|]=2Lw.
\]
The common target ball also gives
$\norm{Y_\mu-Y_\nu}_2^2\leq2R\E[|Y_\mu-Y_\nu|]=2Rw$.
Thus the proof of Theorem~\ref{thm:bounded} applies with
$M_\rho\delta$ replaced by $Lw$ in \eqref{eq:balance} and the initial
term $\delta$ replaced by $(2Rw)^{1/2}$. Balancing the two terms in $t$
gives \eqref{eq:W1}.

Every bounded probability density $f$ supported in $B(x_0,L)$ satisfies
\eqref{eq:source} with $p=\infty$: take $\rho_0$ uniform on this ball
and $h=|B(x_0,L)|f$. Thus \eqref{eq:W1} applies to every such density
with exponent $1/4$.
\end{remark}

\begin{remark}[Heat regularization]\label{rem:heat}
The proof also applies to the heat regularization
$P_t\phi$, where $P_t=e^{t\Delta}$. Convexity gives $P_t\phi\geq\phi$
and $(P_t\phi)^*\leq\phi^*$, so the change in the duality gap is
bounded by $\int(P_t\phi-\phi)\dd\rho$.
Set $\kappa_d=\sqrt2\,\E[|G_d|]$ for $G_d\sim N(0,I_d)$.
The pointwise bound $0\leq P_t\phi-\phi\leq R\kappa_d\sqrt t$
and the BV estimate
$\int(P_t\phi-\phi)\dd\rho_0\leq RB_0t$, combined with
H\"older's inequality, imply
\[
\int(P_t\phi-\phi)\dd\rho
\leq R\Lambda\kappa_d^{1/p}B_0^{1/p'}t^{(1+1/p')/2}.
\]
The regularized gradient satisfies
\[
\Lip(\nabla P_t\phi)\leq\frac{R}{\sqrt{\pi t}}.
\]
The same coercivity and balancing argument proves a bound of the form
\eqref{eq:bounded}, with a universal numerical constant and source
parameter $\cS_p\kappa_d^{p'-1}$. The H\"older exponent is $\alpha_p$.
Since $\kappa_d$ is of order $\sqrt d$, the extra factor contributes to
the dimension dependence when $1<p<\infty$ and equals one when
$p=\infty$.
\end{remark}

\section{One target with a bounded moment}\label{sec:unbounded}

For $\mu\in\cP_2(\R^d)$, define
\[
\tau_\mu(R)=\biggl(\int_{|y|>R}|y|^2\dd\mu(y)\biggr)^{\!1/2}.
\]
We extend the bounded-target estimate by adding an error term involving
the tail of $\mu$.

\begin{proposition}[Stability with a target-tail error]\label{prop:tailestimate}
Assume \eqref{eq:source}, and let $\mu$, $\nu\in\cP_2(\R^d)$.
For every $R>0$ such that $\mu(\overline B(0,R))>0$, with
$\delta=W_2(\mu,\nu)$, one has
\begin{equation}\label{eq:tailestimate}
\norm{T_\mu-T_\nu}_{L^2(\rho)}
\leq\delta+2\sqrt2\,R^{1-\alpha_p}(\cS_p\delta)^{\alpha_p}
+3\sqrt2\,\tau_\mu(R).
\end{equation}
\end{proposition}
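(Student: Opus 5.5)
Throughout, write $D=\norm{T_\mu-T_\nu}_{L^2(\rho)}$. The plan is to reduce to Theorem~\ref{thm:bounded} by truncating $\mu$ to the ball. Let $P=\pi_R$ be the nearest-point projection onto $\overline B(0,R)$ and set $\tilde\mu=P_\#\mu$. This measure is supported in $\overline B(0,R)$, and the hypothesis $\mu(\overline B(0,R))>0$ plays no essential role for the projection; alternatively, one could condition $\mu$ on the ball, which needs that hypothesis. The map $\tilde T=P\circ T_\mu$ pushes $\rho$ to $\tilde\mu$. Since $P$ is the gradient of the convex function $\psi(y)=|y|^2/2$ for $|y|\leq R$, $\psi(y)=R|y|-R^2/2$ for $|y|>R$, the map $\tilde T$ is not a gradient of a convex function of $x$ in general, so I would avoid claiming $T_{\tilde\mu}=\tilde T$. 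Instead I will bound both $\norm{T_\mu-T_{\tilde\mu}}_{L^2(\rho)}$ and $\norm{T_{\tilde\mu}-T_\nu}_{L^2(\rho)}$.

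\textbf{Step 1.} Since $|y-P(y)|\leq|y|\one_{|y|>R}$, we have $W_2(\mu,\tilde\mu)\leq\tau_\mu(R)$. By the triangle inequality, $\tilde\delta:=W_2(\tilde\mu,\nu)\leq\delta+\tau_\mu(R)$.

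\textbf{Step 2.} Apply Theorem~\ref{thm:bounded} to the pair $(\tilde\mu,\nu)$:
\[
\norm{T_{\tilde\mu}-T_\nu}_{L^2(\rho)}\leq\tilde\delta+2\sqrt2 R^{1-\alpha_p}(\cS_p\tilde\delta)^{\alpha_p}.
\]
Here lies the difficulty: the power term involves $\delta+\tau$, not $\delta$. To keep the tail out of the Hölder term, I would instead rerun the proof of Theorem~\ref{thm:bounded} directly with potential $\tilde\phi$ for $\tilde\mu$. First, Lemma~\ref{lem:gap-distance} gives a coupling $\sigma$ of $(\rho,\tilde\mu)$. I would glue $\mu\to\tilde\mu$ via $P$ and then the optimal $(\mu,\nu)$ coupling. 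Then
\[
\cG(\sigma)=\E\ip{X_{\tilde\mu}-X_\nu}{Y_{\tilde\mu}}\leq\E\ip{X_{\tilde\mu}-X_\nu}{Y_{\tilde\mu}-Y_\nu}.
\]
Split $Y_{\tilde\mu}-Y_\nu=(Y_{\tilde\mu}-Y_\mu)+(Y_\mu-Y_\nu)$. This gives $\cG(\sigma)\leq 2M_\rho(\delta+\tau)$, which again mixes the terms. This is the main obstacle.

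\textbf{Step 3 (fallback, accepting the mixing).} If the direct route stalls, one can compare $T_\mu$ with $T_{\tilde\mu}$ more cleverly. Monotonicity (cyclical) arguments in the gap give
\[
\E\ip{X_\mu-X_{\tilde\mu}}{Y_\mu-Y_{\tilde\mu}}\geq0,
\]
which is not directly helpful. A better idea is that $\cG$ for $\phi$ on the coupling $\operatorname{Law}(X_\mu,P(T_\mu X_\mu))$ can be controlled by the tail through the $R$-Lipschitz extension. The clean choice is to apply coercivity with the regularized truncated potential $\tilde\phi_t$ to the coupling $(X_\nu,Y_{\tilde\mu})$ as in \eqref{eq:graph}, and then to measure
\[
\norm{T_\mu-T_\nu}\leq\norm{\nabla\tilde\phi_t(X_\nu)-Y_{\tilde\mu}}+\norm{Y_{\tilde\mu}-Y_\mu}+\norm{Y_\mu-Y_\nu}+\norm{T_\mu-\nabla\tilde\phi_t},
\]
bounding the last term by the graph estimate on $\sigma_*$ plus $\norm{T_\mu-T_{\tilde\mu}}$. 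The factor $3\sqrt2$ suggests roughly three tail contributions, each of size at most $\sqrt2\tau$. So I expect $\norm{T_\mu-T_{\tilde\mu}}\lesssim\tau$ to be the key lemma. I would attempt to prove it by writing $\phi_\mu$ restricted to slopes in the ball (the supremum of supporting affine functions with slopes in $\overline B(0,R)$). This potential has gradient equal to $T_\mu$ wherever $|T_\mu|\leq R$, its gradient pushes $\rho$ to some measure $\hat\mu$ on the ball, and its discrepancy from $T_\mu$ is supported where $|T_\mu|>R$ with size at most $|T_\mu|$, giving an error of at most $\tau$ in $L^2(\rho)$. Using $\hat\mu$ instead of $\tilde\mu$ avoids Step 2's mixing if one applies Theorem~\ref{thm:bounded} to $(\hat\mu,\nu)$ and absorbs $W_2(\hat\mu,\mu)\leq\tau$. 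The remaining mixing $(\cS_p(\delta+\tau))^{\alpha_p}R^{1-\alpha_p}$ is the step I expect to need a sharper gap bound. Specifically, I would bound $\cG(\sigma)$ by $2M_\rho\delta$ plus a term handled separately through coercivity, which yields a contribution of order $\tau$ rather than $(\tau)^{\alpha_p}$.
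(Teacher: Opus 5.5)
There is a genuine gap: the proposal does not establish \eqref{eq:tailestimate}. Every route you try replaces $\mu$ by a measure in the ball, either $\tilde\mu=P_\#\mu$ or $\hat\mu=(\nabla\phi_R)_\#\rho$. You then compute the duality gap on a coupling with that measure, so Lemma~\ref{lem:gap-distance} only gives $2M_\rho(\delta+\tau_\mu(R))$, and the H\"older term picks up $R^{1-\alpha_p}(\cS_p\tau_\mu(R))^{\alpha_p}$. This term is not $O(\tau_\mu(R))$. Under a $q$-moment bound it is only $O(R^{1-\alpha_pq/2})$, which grows with $R$ when $q<2/\alpha_p$, so Theorem~\ref{thm:moment} could not be deduced from it. Your closing sentence names what is needed, namely a gap bounded by $2M_\rho\delta$ with the tail entering additively, but gives no mechanism for it.

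The missing idea is to keep the original target and the original coupling, and to truncate only the potential. You already have the right object, $\phi_R(x)=\sup_{|y|\leq R}\{\ip{x}{y}-\phi^*(y)\}$, where $\phi$ is the Brenier potential of $\mu$; it is finite because $\mu(\overline B(0,R))>0$. What matters is not the measure to which it transports $\rho$, but the identities $\phi_R\leq\phi$ and $\phi_R^*=\phi^*+\iota_{\overline B(0,R)}$. Combined with $\phi_{R,t}^*=\phi_R^*+\frac t2|\cdot|^2$ for the Moreau envelope $\phi_{R,t}$, they give, for $|y|\leq R$,
\[
g_{\phi_{R,t}}(x,y)\leq g_\phi(x,y)+a_{R,t}(x)+\frac t2\bigl(|y|^2-|\nabla\phi_R(x)|^2\bigr),
\]
where $a_{R,t}=\phi_{R,t}-\phi_R+\frac t2|\nabla\phi_R|^2$ and $\int a_{R,t}\dd\rho$ is controlled by Lemma~\ref{lem:gap-stability}.

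Now take $\sigma=\operatorname{Law}(X_\nu,Y_\mu)\in\Pi(\rho,\mu)$ from Lemma~\ref{lem:gap-distance} for the original pair $(\mu,\nu)$. Then the gap of the untruncated $\phi$ satisfies $\cG(\sigma)\leq2M_\rho\delta$, with no tail. Apply the coercivity \eqref{eq:Fenchel} of $\phi_{R,t}$ only on $\{|y|\leq R\}$. There the correction term integrates to at most $\tau_\mu(R)^2$: its integral over all pairs equals $\int(|T_\mu|^2-|\nabla\phi_R|^2)\dd\rho\leq\tau_\mu(R)^2$, and it is nonnegative on $\{|y|>R\}$. On $\{|y|>R\}$, use the crude bound $|\nabla\phi_{R,t}(x)-y|^2\leq4|y|^2$. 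Run the same argument on the optimal graph, where the correction vanishes on $\{|T_\mu|\leq R\}$. Combining the two with $|a+b|^2\leq2|a|^2+2|b|^2$ gives
\[
\int|T_\mu(x)-y|^2\dd\sigma\leq\frac4t\cG(\sigma)+\frac8t\int a_{R,t}\dd\rho+18\tau_\mu(R)^2.
\]
The tail therefore enters only additively. The choice of $t$ from Theorem~\ref{thm:bounded} then yields \eqref{eq:tailestimate}, with $3\sqrt2=\sqrt{18}$.
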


\begin{proof}
The case $\delta=0$ follows from uniqueness, so assume $\delta>0$.
Let $\phi$ be a proper lower semicontinuous Brenier potential from
$\rho$ to $\mu$. Define the slope truncation
\begin{equation}\label{eq:truncation}
\phi_R(x)=(\phi^*+\iota_{\overline B(0,R)})^*(x)
=\sup_{|y|\leq R}{\{\ip{x}{y}-\phi^*(y)\}},
\end{equation}
where $\iota_E$ is zero on $E$ and $+\infty$ outside $E$.
Since $\phi^*$ is finite $\mu$-almost everywhere and
$\mu(\overline B(0,R))>0$, the supremum includes a finite affine function.
Lower semicontinuity on the compact ball bounds $\phi^*$ from below
there. Hence $\phi_R$ is finite, convex and $R$-Lipschitz, with
\begin{equation}\label{eq:truncateddual}
\phi_R\leq\phi,\qquad
\phi_R^*=\phi^*+\iota_{\overline B(0,R)}.
\end{equation}
This is the Pasch--Hausdorff Lipschitz regularization,
\[
\phi_R(x)=\inf_z{\{\phi(z)+R|x-z|\}};
\]
see
Bauschke and Combettes~\cite[Proposition~12.17]{BauschkeCombettesConvex}
for the envelope properties and Beck and
Teboulle~\cite[Lemma~2.1]{BeckTeboulleExact} for its dual representation.

Write $T=\nabla\phi$, $T_R=\nabla\phi_R$ and let
$\phi_{R,t}$ be the Moreau--Yosida regularization of $\phi_R$.
Set
\[
a_{R,t}=\phi_{R,t}-\phi_R+\frac t2|T_R|^2,\qquad
A_{R,t}=\int a_{R,t}\dd\rho.
\]
Applying Lemma~\ref{lem:gap-stability} to $\phi_R$, we obtain
\begin{equation}\label{eq:truncated-bias}
A_{R,t}\leq\frac{H_p}{2\,3^{1/p'}}R^{2+1/p'}t^{1+1/p'}.
\end{equation}
Also $|T_R|$, $|\nabla\phi_{R,t}|\leq R$. Wherever $|T(x)|\leq R$,
Fenchel equality gives $\phi_R(x)=\phi(x)$, hence $T_R(x)=T(x)$
almost everywhere on that set. Consequently,
\begin{equation}\label{eq:truncated-moment}
0\leq\int\bigl(|T|^2-|T_R|^2\bigr)\dd\rho
\leq\tau_\mu(R)^2.
\end{equation}

For $|y|\leq R$, the conjugacy identities
\eqref{eq:truncateddual} and \eqref{eq:Moreau-dual} imply
\begin{equation}\label{eq:smallslopes}
g_{\phi_{R,t}}(x,y)
\leq g_\phi(x,y)+a_{R,t}(x)
+\frac t2\bigl(|y|^2-|T_R(x)|^2\bigr).
\end{equation}
For any $\sigma\in\Pi(\rho,\mu)$, the function
$|y|^2-|T_R(x)|^2$ is nonnegative on $\{|y|>R\}$.
Thus its integral over $\{|y|\leq R\}$ is bounded
above by its integral over all pairs, which is at most
$\tau_\mu(R)^2$ by \eqref{eq:truncated-moment}.
Since $g_\phi$ and $a_{R,t}$ are nonnegative, coercivity gives
\[
\int_{|y|\leq R}|\nabla\phi_{R,t}(x)-y|^2\dd\sigma
\leq\frac2t\bigl(\cG(\sigma)+A_{R,t}\bigr)+\tau_\mu(R)^2.
\]
On $\{|y|>R\}$, use
$|\nabla\phi_{R,t}(x)-y|^2\leq4|y|^2$ instead. We obtain
\begin{equation}\label{eq:tailgraph1}
\int|\nabla\phi_{R,t}(x)-y|^2\dd\sigma
\leq\frac2t\bigl(\cG(\sigma)+A_{R,t}\bigr)+5\tau_\mu(R)^2.
\end{equation}
Here $\cG$ is the gap for the original potential $\phi$ as in
\eqref{eq:gap}. Its potential terms are integrable: affine lower
bounds for $\phi$ and $\phi^*$ control their negative parts, while
Fenchel equality on the optimal graph and the second moments control
their positive parts.

On the optimal graph, $T_R=T$ when $|T|\leq R$, so the moment correction
vanishes on this set:
\[
\int|\nabla\phi_{R,t}-T|^2\dd\rho
\leq\frac2t A_{R,t}+4\tau_\mu(R)^2.
\]
Combining this with \eqref{eq:tailgraph1} gives
\[
\int|T(x)-y|^2\dd\sigma
\leq\frac4t\cG(\sigma)+\frac8t A_{R,t}+18\tau_\mu(R)^2.
\]
Choose the coupling from Lemma~\ref{lem:gap-distance} and use
\eqref{eq:truncated-bias}. We bound the first two terms as
in \eqref{eq:balance}. The same choice of $t$, followed by
$\sqrt{a+b}\leq\sqrt a+\sqrt b$ and the target-coupling triangle
inequality, proves \eqref{eq:tailestimate}.
\end{proof}

\begin{remark}
The estimate \eqref{eq:smallslopes} bounds the regularized gap by the
original Fenchel gap, the regularization error and a moment correction.
Lemma~\ref{lem:gap-distance} therefore applies with the original targets
$\mu$ and $\nu$, while
\eqref{eq:truncated-moment} bounds the correction by the tail of $\mu$.
Delalande--M\'erigot~\cite[Theorem~4.3 and
Corollary~4.4]{DelalandeMerigotQuantitative} treat unbounded targets
using moment estimates and local control of potentials.
\end{remark}

\begin{proof}[Proof of Theorem~\ref{thm:moment}]
Write $\alpha=\alpha_p$, $\beta=\beta_{p,q}$ and
$z=\cS_p\delta/A$. The moment bound gives
\begin{equation}\label{eq:tailmoment}
\tau_\mu(R)\leq A^{q/2}R^{-(q-2)/2}.
\end{equation}
If $0<z\leq1$, choose
\[
R=2A z^{-1/[q(p'+1)-1]}.
\]
Then $R\geq2A$, so $\mu(\overline B(0,R))>0$.
The two error terms in Proposition~\ref{prop:tailestimate} have the
same power $z^\beta$, because
\[
\alpha-\frac{1-\alpha}{q(p'+1)-1}
=\frac{q-2}{2[q(p'+1)-1]}=\beta.
\]
Their sum is bounded by
\[
\bigl(2\sqrt2\,2^{1-\alpha}+3\sqrt2\,2^{-(q-2)/2}\bigr)Az^\beta
\leq7\sqrt2\,Az^\beta<10Az^\beta.
\]
If $z\geq1$, a $W_2$-optimal target coupling and Minkowski's inequality give
\[
\norm{T_\nu}_{L^2(\rho)}
=\biggl(\int|y|^2\dd\nu(y)\biggr)^{\!1/2}\leq A+\delta.
\]
Hence $\norm{T_\mu-T_\nu}_2\leq2A+\delta\leq\delta+10Az^\beta$.
The case $\delta=0$ follows from uniqueness.
\end{proof}

\section{Optimality of the exponents}\label{sec:sharpness}

\begin{proof}[Proof of Proposition~\ref{prop:sharpness}]
Fix $p\in(1,\infty]$ and its conjugate $p'$. We construct one source
whose mass is concentrated on pairs of thin rectangles escaping to
infinity. For integers $n\geq N$, with $N$ sufficiently large, set
\begin{equation}\label{eq:sharp-scales}
\begin{gathered}
m_n=e^{-n^2},\qquad b_n=n^{2(p'-1)}m_n^{p'},\qquad
\ell_n=n^2b_n,\\
L_n=\frac1{n\sqrt{m_n}},\qquad c_n=1-2^{-n}.
\end{gathered}
\end{equation}
Let $u_n^\pm$ be the uniform probability measures on the rectangles
\[
Q_n^\pm=
\biggl[c_n\pm\ell_n-\frac{\ell_n}{4},
      c_n\pm\ell_n+\frac{\ell_n}{4}\biggr]
\times[\mp L_n-1,\mp L_n+1],
\]
and let $u_0$ be uniform on $[-2,-1]\times[-1,1]$. Define
\begin{align*}
\rho&=\biggl(1-\sum_{n\geq N}m_n\biggr)u_0
       +\sum_{n\geq N}\frac{m_n}{2}(u_n^++u_n^-),\\
\rho_0&=\biggl(1-\sum_{n\geq N}b_n\biggr)u_0
       +\sum_{n\geq N}\frac{b_n}{2}(u_n^++u_n^-).
\end{align*}
These are probabilities on pairwise disjoint rectangles. Since
$b_n\leq m_n$ and $m_nL_n^2=n^{-2}$, both measures have finite second
moments. Each $Q_n^\pm$ has area $\ell_n$ and perimeter $4+\ell_n$,
so the density $f_0$ of $\rho_0$ satisfies
\[
|Df_0|(\R^2)\leq C+\sum_{n\geq N}
       b_n\frac{4+\ell_n}{\ell_n}<\infty.
\]
For $p<\infty$, the density $h=\dd\rho/\dd\rho_0$ obeys
\[
\int h^p\dd\rho_0
=C_p+\sum_{n\geq N}\frac{m_n^p}{b_n^{p-1}}
=C_p+\sum_{n\geq N}\frac1{n^2}<\infty.
\]
For $p=\infty$, $b_n=m_n$, so $\rho=\rho_0$ and $h=1$.
The density is then bounded, since its value on $Q_n^\pm$ is
$m_n/(2\ell_n)=1/(2n^2)$.

Put $s_n=\sum_{j>n}m_j$ and $r_n=m_n/2+s_n$, so that
$r_n\sim m_n/2$ and $s_n<m_n/4$. The half-space $\{x_1>c_n\}$
contains $Q_n^+$ and all rectangles with index $j>n$, hence has
$\rho$-mass $r_n$. Set
\[
\eta_n=\frac{4\ell_n}{L_n}
       =4n^{2p'+1}m_n^{p'+1/2},\qquad
v_n=\frac{e_1+\eta_ne_2}{\sqrt{1+\eta_n^2}}.
\]
Under the projection $F_n(x)=x_1+\eta_nx_2$, the two rectangles
exchange order:
\[
F_n(Q_n^+)\subset
\biggl[c_n-\frac72\ell_n,c_n-\frac52\ell_n\biggr],\qquad
F_n(Q_n^-)\subset
\biggl[c_n+\frac52\ell_n,c_n+\frac72\ell_n\biggr].
\]
Their centers project to $c_n\mp3\ell_n$ and their projected
half-width is at most $\ell_n/4+\eta_n\leq\ell_n/2$.
Every rectangle with index $j<n$ projects below
$c_n-4\ell_n$. For $N$ large, the right edges of the rectangles
increase with their index and $L_{n-1}+1\leq L_n$, so their projections
are at most
\[
c_{n-1}+\frac54\ell_{n-1}+4\ell_n<c_n-4\ell_n.
\]
The last inequality follows from $\ell_n\leq n^2e^{-n^2}$ and
$c_n-c_{n-1}=2^{-n}$. The base rectangle satisfies the same bound
directly.

Choose $t_n$ so that $\rho(F_n>t_n)=r_n$; the projected distribution
has no atoms. The two current rectangles lie above all earlier ones
and have mass $m_n>r_n$. Since the later rectangles have total mass
$s_n$, the level set must take at least $r_n-s_n=m_n/2$ from the current
pair, starting with $Q_n^-$. It therefore contains all of $Q_n^-$ and
at most mass $s_n$ from $Q_n^+$.

For any amplitude $a_n>0$, consider the two-point targets
\[
\mu_n=(1-r_n)\delta_0+r_n\delta_{a_ne_1},\qquad
\nu_n=(1-r_n)\delta_0+r_n\delta_{a_nv_n}.
\]
Their Brenier maps are
\[
T_{\mu_n}=a_ne_1\one_{\{x_1>c_n\}},\qquad
T_{\nu_n}=a_nv_n\one_{\{F_n>t_n\}},
\]
as gradients of scaled positive parts of affine functions.
On mass at least $m_n-s_n$, exactly one of these maps vanishes.
Matching corresponding target atoms, we obtain
\begin{equation}\label{eq:sharp-rates}
\begin{aligned}
\norm{T_{\mu_n}-T_{\nu_n}}_2&\geq a_n\sqrt{m_n-s_n},\\
\delta_n&\leq a_n\sqrt{r_n}\,|e_1-v_n|
\leq C a_n n^{2p'+1}m_n^{p'+1}.
\end{aligned}
\end{equation}
Taking $a_n=1$ gives targets in the unit ball. Since $m_n=e^{-n^2}$,
\eqref{eq:sharp-rates} rules out every exponent larger than
$1/[2(p'+1)]=\alpha_p$.

For $q>2$, take $a_n=r_n^{-1/q}$. Both targets then
have $q$-moment exactly one, while
\[
\norm{T_{\mu_n}-T_{\nu_n}}_2\geq c m_n^{1/2-1/q},\qquad
\delta_n\leq Cn^{2p'+1}m_n^{p'+1-1/q}.
\]
Every exponent larger than
$(1/2-1/q)/(p'+1-1/q)=\beta_{p,q}$ is therefore impossible.
\end{proof}

\sloppy
\printbibliography
\end{document}